\newtheorem{theo}{Theorem}[section]
\newtheorem{definition}[theo]{Definition}
\newtheorem{Lemma}[theo]{Lemma}
\newtheorem{Prop}[theo]{Proposition}
\newtheorem*{Rm}{Remark}
\newtheorem{Ex}{Example}
\newtheorem*{tma}{Theorem \ref{d par}}
\newcommand{\lra}{ \longrightarrow }
\renewcommand{\Im}{{\rm Im}}
\author{Natalia A. Viana Bedoya. \ and Daciberg Lima Gon\c calves} 
\title{Indecomposability of branched coverings of even degree on the projective plane\footnote{ This work was partially supported 
by Projeto Tem\'atico Topologia Alg\'ebrica, Geom\'etrica
e Differencial-
2008/57607-6. }}
\date{}
\begin{document} 
\maketitle

\begin{abstract} In this work we characterize  branch data of branched coverings of even 
degree over the projective plane  which are realizable by indecomposable branched coverings. \\\\
\textbf{Key words:} {branched coverings, primitive groups, imprimitive groups, 
permutation groups, projective plane.}
\end{abstract}

\section{Introduction}
 It is a natural and standard question to know whether   a map in a given  class of maps  is a 
composition or not of maps in the same
 class. In 1957  Borsuk and Molski \cite{BM} asked about the existence of a continuous map of finite
order\footnote{A continuous map $\phi$ defined on a space $X$ is said to be of \emph{order} $\leq k \in
\mathbb{Z}^{+}$ if for any $y \in \phi(X)$, $\phi^{-1}(y)$ contains at most
$k$ points.}, which is not a composition of \emph{simple maps} (maps of order $\leq 2$). For the historical
development of this question  see  \cite{BBZ}   and \cite{BN&GDL}.   More recently, in 
 2002, Krzempek  \cite{KJ} constructed covering maps on locally arcwise connected continua that are not factorizable 
into covering maps of order $\leq n-1$, for all $n$.  Also in 2002,
 Bogataya, Bogaty{\u\i} and Zieschang \cite{BBZ} 
gave an example  of a 4-fold covering of a surface of genus 2 by a surface of genus 5
that cannot be represented as a composition of two non-trivial open maps.  
The relevant and  general problem of   classifying the branched coverings from the    viewpoint  of  decomposability
 is related with the Inverse Galois problem (see  for example the references 
\cite{Muller} and  \cite{GN}) and with a construction of primitive and imprimitive monodromy groups as treated in \cite{LZ}. Besides the facts mentioned  above,  
the problem is interesting in its own right. 
The present work is a contribuition to the study of this problem. Recently some contribuition        to this problem   was given
 in  \cite{BN0} and \cite{BN&GDL}. More specifically in \cite{BN&GDL} this problem has been considered for the   class of branched 
coverings $\phi:M \rightarrow N$ between 
connected closed surfaces. It consists in classifying 
which branch data (see definition below) can be realized by  indecomposable and which ones  can be realized   by decomposable. 
   In the work  \cite{BN&GDL}  this problem  is completely  solved in the case 
where $N$ is different from the sphere $S^2$ and the projective plane $\mathbb{R}P^{2}$. The main results  
are:

 \begin{theo}(\cite{BN&GDL}).
Every non-trivial admissible data are  realized on any  $N$ with $\chi(N)\leq 0$,  by an
indecomposable primitive branched covering. 
\end{theo}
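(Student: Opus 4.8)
The plan is to translate the statement into a realization problem for the monodromy and then to solve that problem with a \emph{primitive} permutation group, using the equivalence between primitivity of the monodromy and indecomposability of the covering. A connected branched covering $\phi\colon M\to N$ of degree $d$ with branch set $B=\{y_1,\dots,y_k\}$ restricts over $N\setminus B$ to an ordinary covering, hence is classified by its monodromy $\rho\colon\pi_1(N\setminus B)\to S_d$, a transitive degree-$d$ representation with image the monodromy group $G=\Im\rho$. A factorization $M\to W\to N$ into branched coverings of degree $>1$ amounts, after filling in the punctures, to a subgroup strictly between a point stabilizer and $G$, i.e.\ to a nontrivial block system; thus $\phi$ is indecomposable precisely when $G$ is primitive. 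It therefore suffices to realize every non-trivial admissible datum $\mathcal D=(d;\pi_1,\dots,\pi_k)$ by a connected covering whose monodromy group is primitive.

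Since $\mathcal D$ is non-trivial there is at least one branch point, so $N\setminus B$ is an open surface and $\pi_1(N\setminus B)$ is free; the realization problem then becomes combinatorial. I would look for permutations $\sigma_1,\dots,\sigma_k\in S_d$, with $\sigma_j$ of cycle type $\pi_j$, and images $\alpha_i,\beta_i$ (resp.\ $\gamma_i$) of the surface generators, subject only to the single relation
\[
\prod_{i=1}^{g}[\alpha_i,\beta_i]\,\sigma_1\cdots\sigma_k=1
\qquad\text{or}\qquad
\gamma_1^{2}\cdots\gamma_g^{2}\,\sigma_1\cdots\sigma_k=1,
\]
according as $N$ is orientable or not, and so that all these permutations together generate a transitive ---indeed primitive--- subgroup of $S_d$. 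The hypothesis $\chi(N)\le 0$ ensures $g\ge 1$ in the orientable case and $g\ge 2$ in the non-orientable case, so free generators $\alpha_1,\beta_1$ (resp.\ $\gamma_1,\gamma_2$) are at my disposal.

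The main construction would use these free generators, assisted by the branch permutations, to force the monodromy group to contain $A_d$, which is primitive for all $d\ge 2$ (the case $d=2$ being automatic); concretely one arranges a $d$-cycle together with a transposition or a short cycle among the generators. The single relation is then satisfied by letting the remaining free generators absorb the correction $P=(\sigma_1\cdots\sigma_k)^{-1}$: admissibility (the Riemann--Hurwitz identity together with its parity consequence) forces $P\in A_d$, after which $P$ is written as a product of $g$ commutators in the orientable case --- every element of $A_d$ being a single commutator for $d\ge 5$ by Ore's theorem, with the small degrees checked by hand --- and as a product of $g\ge 2$ squares in the non-orientable case, using the standard fact that every even permutation is a product of two squares.

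The hard part will be meeting primitivity and the relation simultaneously in the extremal low-genus cases --- the torus ($g=1$, where the single commutator must both equal $P$ and be assembled from generators of a primitive group) and the Klein bottle ($g=2$ non-orientable) --- and for degenerate data whose partitions leave little freedom in the $\sigma_j$. There the free generators can no longer be chosen independently of $P$, so I would instead rely on the branch permutations for primitivity: choosing conjugate representatives of the prescribed cycle types that already generate a group containing $A_d$ while keeping $\sigma_1\cdots\sigma_k$ equal to a commutator (resp.\ a product of squares). Proving that such representatives exist for \emph{every} admissible datum is the crux, and I would settle it by a case analysis based on classical results: generation of $A_d$ and $S_d$ by elements of prescribed cycle type, and Jordan-type criteria guaranteeing primitivity once the group contains a cycle of prime length fixing few points. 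The remaining checks --- transitivity throughout, and the exact agreement between the parity/Riemann--Hurwitz constraints and the definition of admissibility --- are routine bookkeeping.
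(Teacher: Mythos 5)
First, a point of context: the paper you are being compared against does not prove this statement at all --- it is quoted from \cite{BN&GDL} --- so the comparison can only be with the soundness of your own argument and with the method this circle of papers actually uses. Your reduction is the correct one and agrees with Proposition \ref{yo}: it suffices to realize each non-trivial admissible datum by a covering whose monodromy group $G$ is a primitive permutation group. (One point you leave implicit: the theorem also asserts the covering is \emph{primitive}, i.e.\ $\pi_{1}$-surjective; this does follow, because $\pi_{1}$-surjectivity is equivalent to transitivity of the normal closure in $G$ of the branch permutations, and a non-trivial normal subgroup of a primitive group is transitive.) Your parity bookkeeping is also right: products of commutators and products of squares are even, and admissibility for $\chi(N)\le 0$ is exactly $\nu(\mathscr{D})\equiv 0 \pmod 2$, so the correction $P$ lies in $A_{d}$, where it can indeed be written as one commutator (Ore) or as a product of two squares.

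The genuine gap is that the heart of the theorem is never proved. Everything up to your last paragraph is the routine translation; the entire content of the statement is the claim you defer, namely that for \emph{every} non-trivial admissible datum --- including the extremal cases you correctly flag: the torus and the Klein bottle, few branch points, and rigid partitions such as $[2,\dots,2]$ --- one can choose the $\sigma_{j}$ and the handle/crosscap images so that the surface relation holds \emph{and} the group generated is primitive. You name tools (Ore's theorem, generation results, Jordan-type criteria) but carry out no case analysis, so nothing is established precisely where the theorem is hard; note also that Ore's theorem gives a commutator with no control on the subgroup generated by its entries, which is exactly what you need on the torus. Worse, the one concrete mechanism you propose is false as stated: a transitive group containing a $d$-cycle and a transposition (or a short cycle) need not contain $A_{d}$, nor even be primitive. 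For any composite $d$ with proper divisor $k$, the group $\bigl\langle (1\,2\,\cdots\,d),\,(1\;\,k{+}1)\bigr\rangle$ preserves the blocks given by congruence classes modulo $k$, hence is imprimitive --- for $d=4$, $k=2$ this is the dihedral group $\langle(1\,2\,3\,4),(1\,3)\rangle$ --- and composite $d$ is the only relevant case for decomposability. The criterion that actually works here, and that this paper uses over $\mathbb{R}P^{2}$, is sharper: merge the branch permutations, via Lemmas 4.2, 4.3 and 4.5 of \cite{EKS}, into a product which is a $(d-1)$-cycle, and invoke the fact that a transitive group containing a $(d-1)$-cycle is primitive (Example \ref{e1}); the proof of the quoted theorem in \cite{BN&GDL} is of this nature, with the free generators coming from $\chi(N)\le 0$ absorbing the leftover permutation as a commutator or a product of squares. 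Until your deferred case analysis is actually executed, and with a valid primitivity criterion in place of ``$d$-cycle plus transposition'', the proposal is a plan rather than a proof.
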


\begin{Prop}(\cite{BN&GDL}). 
Admissible data $\mathscr{D}$  are decomposable on $N$,  with  $\chi(N)\leq 0$,
 if and only if there exists a factorization of $\mathscr{D}$
  such that its first factor is  non-trivial  admissible data.
\end{Prop}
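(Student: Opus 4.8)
The plan is to prove the two implications of the equivalence separately: the forward one is a matter of reading off the data of a given decomposition, while the backward one contains all the work and is where the realization result, Theorem~1.1, enters.

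For the forward implication, suppose $\mathscr{D}$ is decomposable, so that it is realized by a branched covering splitting non-trivially as $M\xrightarrow{\phi_1}W\xrightarrow{\phi_2}N$ with both factors of degree $>1$. Let $\mathscr{D}_1$ and $\mathscr{D}_2$ be the branch data of $\phi_1$ and $\phi_2$. The rule that the local degree of a composition at a point is the product of the local degrees of the two factors shows that $(\mathscr{D}_1,\mathscr{D}_2)$ is a factorization of $\mathscr{D}$; since $\phi_1$ realizes $\mathscr{D}_1$ and is non-trivial, its first factor $\mathscr{D}_1$ is non-trivial admissible data, as required.

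For the backward implication I begin with a factorization $(\mathscr{D}_1,\mathscr{D}_2)$ of $\mathscr{D}$ whose first factor $\mathscr{D}_1$ is non-trivial admissible, and construct a decomposable realization. First I realize the outer factor as a branched covering $\phi_2:W\to N$ with branch data $\mathscr{D}_2$; this is possible over $N$ since $\chi(N)\le 0$ and $\mathscr{D}_2$ is the non-trivial admissible datum furnished by the factorization. The decisive numerical point is that the intermediate surface then satisfies $\chi(W)\le 0$: by Riemann--Hurwitz $\chi(W)=d_2\,\chi(N)-R_2$ with $R_2\ge 0$ the ramification of $\phi_2$, so $\chi(W)\le d_2\,\chi(N)\le\chi(N)\le 0$ because $d_2\ge 1$. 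Hence Theorem~1.1 applies with base $W$, and I use it to realize the non-trivial admissible data $\mathscr{D}_1$ by a branched covering $\phi_1:M\to W$, arranging the branch points of $\phi_1$ to lie over the appropriate points of $W$ with the local partitions dictated by the factorization. The composite $\phi_2\circ\phi_1:M\to N$ is then a genuine decomposition, and it remains to check that its branch data is exactly $\mathscr{D}$.

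This last verification is where I expect the main difficulty. Over a point $y\in N$ the local degrees of $\phi_2\circ\phi_1$ are the products $e_j\,f$, with $e_j$ the local degree of $\phi_2$ at a point $w_j\in\phi_2^{-1}(y)$ and $f$ a local degree of $\phi_1$ above $w_j$, so I must ensure the branch points of $\phi_1$ are distributed among the fibres $\phi_2^{-1}(y)$ with precisely the partitions that make these products recover the prescribed partition of $\mathscr{D}$ at $y$. This distribution is exactly the information encoded by the combinatorial notion of a factorization, and it is what the admissibility hypothesis on $\mathscr{D}_1$ protects: because $\mathscr{D}_1$ is non-trivial admissible and $\chi(W)\le 0$, Theorem~1.1 produces a connected realization, and since that theorem realizes a prescribed collection of partitions at freely chosen points of $W$, I may place each partition over the fibre point that the factorization designates. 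Confirming that this introduces no extraneous ramification and that the multiset of $\phi_1$-partitions is exactly the one prescribed then yields that $\phi_2\circ\phi_1$ realizes $\mathscr{D}$, completing the backward implication.
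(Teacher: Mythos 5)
There is a genuine gap, and it is structural: the two implications of your argument silently use \emph{different} readings of which factor of a factorization is the ``first'' one, and the reading your forward implication relies on actually makes the proposition false. (The paper only quotes this proposition from \cite{BN&GDL} and contains no proof of it, so the issues below concern your argument on its own terms.) In the forward direction you set $\mathscr{D}_1:=$ branch data of the inner map $\phi_1\colon M\to W$, call it the first factor, and conclude it is non-trivial ``since $\phi_1$ is non-trivial''. This conflates a non-trivial \emph{covering} (degree $>1$) with non-trivial \emph{branch data}: the paper itself only asserts that in a decomposition \emph{at least one} component has proper branching, and it need not be $\phi_1$. Indeed a decomposable primitive covering can have an unbranched inner factor: take a primitive branched double covering $\phi_2\colon W\to T^2$ with data $\{[2],[2]\}$ (so $W$ has genus $2$) and an unbranched double covering $\phi_1\colon M\to W$ corresponding to a subgroup $K<\pi_1(W)$ of index $2$ with $\phi_{2\#}(K)=\pi_1(T^2)$; such $K$ exists (pick $\tilde a,\tilde b\in\pi_1(W)$ mapping to generators of $\pi_1(T^2)$ and take the kernel of a non-zero homomorphism $\pi_1(W)\to\mathbb{Z}/2$ vanishing on both, possible since $H_1(W;\mathbb{Z}/2)$ has rank $4$). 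Then $\phi_2\circ\phi_1$ is primitive and decomposable, yet $\mathscr{D}_1=\emptyset$. Note also that primitivity, which is part of the very definition of ``decomposable on $N$'', is never invoked anywhere in your proposal.

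The deeper problem is in the backward direction, where the hypothesis you actually consume is admissibility of the \emph{outer} factor: to build $\phi_2\colon W\to N$ at all you need $\mathscr{D}_2$ admissible, and your justification --- that ``$\mathscr{D}_2$ is the non-trivial admissible datum furnished by the factorization'' --- contradicts your own convention, under which the hypothesis concerns $\mathscr{D}_1$. This is not a repairable slip, because admissibility of the inner factor does not imply realizability of the outer one. Concretely, let $N=T^2$, $d=4$, $\mathscr{D}=\{[2,2],[2,1,1],[2,1,1]\}$, which is admissible ($\nu(\mathscr{D})=4$). A point with composite partition $[2,1,1]$ cannot lie under an outer partition $[2]$ (that would force $[4]$ or $[2,2]$), so every factorization into degrees $2\cdot 2$ has outer factor either $\{[2]\}$ or trivial, while the inner factor is $\{[2],[2]\}$ or $\{[2],[2],[2],[2]\}$ --- non-trivial and admissible in both cases. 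Yet $\mathscr{D}$ is \emph{not} decomposable: an outer factor $\{[2]\}$ has odd total defect, hence is realizable over no surface, and an unbranched outer factor gives $\phi_\#\pi_1(M)\subset\phi_{2\#}\pi_1(W)\subsetneq\pi_1(N)$, destroying primitivity. So with your reading the ``if'' direction is false; the proposition must be read with ``first factor'' meaning the factor realized over the base $N$. Under that reading the correct skeleton is: forward, primitivity forces the factor over $N$ to be branched, and it is admissible because it is realized; backward, realize that factor by a \emph{primitive} $\phi_2\colon W\to N$, observe that the identity $\nu(\mathscr{D})=d_1\nu(\mathscr{D}_2)+\nu(\mathscr{D}_1)$ then makes the inner defect even, realize the inner data over $W$ with branch points in the prescribed fibres by a primitive $\phi_1$, and use that a composite of primitive coverings is primitive. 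Your proposal is missing each of these ingredients.
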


 The purpose of this work is to study the same  question for branched coverings over  $N=\mathbb{R}P^{2}$ where the  
degree of the covering map $\phi: M \to \mathbb{R}P^{2}$ is even. 

 A branched covering $\phi: M \lra N$ of degree $d$ between closed connected surfaces determines  
a finite collection of partitions $\mathscr{D}$ of $d$, \emph{the branch
 data}, in correspondence with the {\it branch point set} $B_{\phi} \subset N$. The {\it total defect}
 of $\mathscr{D}$ is defined by $\nu(\mathscr{D})=\sum_{x \in B_{\phi}}d-\# \phi^{-1}(x)$. Conversely,
 given a collection of partitions $\mathscr{D}$ of $d$ and $N \neq S^{2}$, the necessary 
conditions for $\mathscr{D}$ to be a branch datum are sufficient to realize it. Let us point 
out that the  realization problem for $\mathbb{R}P^{2}$ has been solved in \cite{EKS}, in this 
case we will call $\mathscr{D}$ {\it admissible datum}.  

 From now on let $N=\mathbb{R}P^{2}$ and $\phi: M \to \mathbb{R}P^{2}$ a primitive branched covering, 
i.e. the induced homomorphism $\pi_1(M) \to \pi_1(\mathbb{R}P^{2})$ 
is surjective. Since $\phi$ is an
orientation-true map then $M$ is nonorientable, 
see \cite{GKZ}.  
If the homomorphism is not surjective the map admits an obvious decomposition.  

 The main result is:
  \begin{tma}
Let $d$ be even and $\mathscr{D}=\{D_{1},\dots,D_{s}\}$ an admisible datum. Then, $\mathscr{D}$ is realizable by an indecomposable
   branched covering if, and only if, either:\\
(1) $d=2$, or\\
 (2) There is  $i \in \{1,\dots,s\}$ such that
   $D_{i} \neq [2,\dots,2]$, or\\
(3) $d>4$ and $s>2$.
\end{tma}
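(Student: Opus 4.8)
The plan is to translate the problem into permutation-group language and to reduce indecomposability to primitivity of a monodromy group. Puncturing the branch set $B=\{x_1,\dots,x_s\}$, the surface $\mathbb{R}P^{2}\setminus B$ has a fundamental group with a presentation of the form
\[
\langle a,\gamma_1,\dots,\gamma_s \mid a^{2}=\gamma_1\cdots\gamma_s\rangle,
\]
so a realization of $\mathscr{D}$ is a homomorphism $\rho$ with $g:=\rho(a)$ and $h_i:=\rho(\gamma_i)$ of cycle type $D_i$, satisfying $g^{2}=h_1\cdots h_s$ and generating a transitive group $G=\langle g,h_1,\dots,h_s\rangle\le S_d$. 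The standing primitivity of the covering (surjectivity of $\pi_1(M)\to\pi_1(\mathbb{R}P^2)$) is equivalent to the nonexistence of a $2$-colouring $c:\{1,\dots,d\}\to\mathbb{Z}/2$ with $c\circ h_i=c$ and $c\circ g=c+1$ (otherwise $\phi$ factors through $S^2\to\mathbb{R}P^2$, the obvious decomposition). The key reduction I would record is that, for a primitive covering, a nontrivial factorization of $\phi$ corresponds exactly to a nontrivial block system of $G$; hence, apart from the prime-degree case $d=2$ (where no nontrivial factorization exists and admissibility gives the realization, settling (1)), the statement becomes the existence or nonexistence of a monodromy as above with $G$ \emph{primitive} as a permutation group.

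For the \emph{only if} direction I would show that in the excluded cases every $G$ is imprimitive. Suppose all $D_i=[2,\dots,2]$ (fixed-point-free involutions). If $d=4$, these involutions are precisely the nonidentity elements of $V_4\trianglelefteq S_4$, so $h_1\cdots h_s\in V_4$ and $g^{2}\in V_4$; thus the image of $g$ in $S_4/V_4\cong S_3$ has order $\le 2$, whence $|G|$ divides $8$ and $G$ is a $2$-group, which cannot be primitive since the only primitive groups of degree $4$ are $A_4$ and $S_4$. If instead $s\le 2$ and $d\ge 6$, then $s=1$ is non-admissible (Riemann--Hurwitz gives $\chi(M)=d/2>1$), so $s=2$; from $g^{2}=h_1h_2$ one gets $h_2=h_1g^{2}$ and $(h_1g^{2})^{2}=1$, i.e. $h_1$ inverts $g^{2}$, while $g$ centralizes $g^{2}$, so $\langle g^{2}\rangle\trianglelefteq G$. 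A nontrivial normal subgroup of a primitive group is transitive, forcing either $g^2=1$ or $\langle g^2\rangle$ transitive; a transitive cyclic group makes $g^{2}$ a $d$-cycle, impossible as a product of two fixed-point-free involutions (one of the two reflections would then fix points), while $g^{2}=1$ makes $G=\langle g,h_1\rangle$ a transitive dihedral group, imprimitive for composite $d\ge 6$. In all cases $G$ is imprimitive, so every realization of such $\mathscr{D}$ is decomposable.

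For the \emph{if} direction I would construct, in cases (2) and (3), a primitive monodromy, the natural target being $G=A_d$ or $G=S_d$ (primitive for $d\ge 3$). Here one prescribes involutions/permutations $h_i$ of type $D_i$ and sets $g$ to be a square root of $h_1\cdots h_s$, adjusting the cycle structure of $g$ so that it has an odd-length cycle, which guarantees the colouring obstruction is nontrivial and hence surjectivity on $\pi_1$. In case (2) a part different from $2$ (a fixed point, a cycle of length $\ge 3$, or an odd cycle) supplies the element needed to invoke a Jordan-type genericity criterion forcing $\langle g,h_1,\dots,h_s\rangle$ to be $A_d$ or $S_d$. In case (3) all $D_i=[2,\dots,2]$, and here the hypotheses $d>4$ and $s>2$ are used precisely to escape the obstructions of the previous paragraph: with $d>4$ one avoids the $V_4$-collapse, and the presence of a third fixed-point-free involution $h_3$ breaks the normal cyclic subgroup $\langle g^{2}\rangle$ that trapped the $s=2$ case into dihedral (solvable) behaviour, allowing the three involutions (with a suitable square-root $g$) to generate a primitive group.

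The main obstacle I anticipate is this constructive step, carried out uniformly over \emph{all} admissible data in (2) and (3): one must simultaneously fit the prescribed cycle types $D_i$, make the product $h_1\cdots h_s$ a square (so that $g$ exists), ensure an odd cycle in $g$ for surjectivity, and certify primitivity (ideally $A_d$ or $S_d$). Establishing a clean realization lemma of the form ``generic choices of the $h_i$ generate $A_d$ or $S_d$ with the required square-product and parity constraints,'' together with an honest treatment of the small-$d$ base cases and of the all-involution configuration in (3), is where the real work lies; by contrast the \emph{only if} direction is comparatively soft, resting on the two elementary structural facts that $h_1\cdots h_s\in V_4$ when $d=4$ and that $\langle g^2\rangle$ is normal when $s=2$.
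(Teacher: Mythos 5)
Your setup and your \emph{only if} direction are sound, and the latter is a genuinely different and more conceptual route than the paper's: for $d=4$ you observe that fixed-point-free involutions are exactly the non-identity elements of $V_4\trianglelefteq \Sigma_4$, so $G$ is a $2$-group and cannot be primitive; for $s=2$, $d\geq 6$ you use the relation $g^{2}=h_1h_2$ to get $\langle g^{2}\rangle\trianglelefteq G$, and primitivity would force either $\langle g^2\rangle$ transitive, i.e. $g^{2}$ a $d$-cycle (impossible, since a product of two fixed-point-free involutions is an even permutation while a $d$-cycle with $d$ even is odd, or by your reflection-class argument), or $g^{2}=1$, making $G$ a transitive dihedral group, which is imprimitive in composite degree. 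The paper instead proves these cases by hand (Lemma \ref{unico}, Proposition \ref{d>4t=2} and the degree-$4$ proposition), via an explicit analysis of $Fix(\gamma_{1}\gamma_{2})$, of the orbits of $\langle\gamma_{1},\gamma_{2}\rangle$ and of how $\alpha$ can connect them; your normal-subgroup arguments are shorter and would be a legitimate replacement for that half of the theorem.

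The genuine gap is the \emph{if} direction, i.e.\ the realization in cases (2) and (3), which you only outline and yourself flag as ``where the real work lies.'' As it stands nothing is proved there: for every admissible $\mathscr{D}$ you must produce $h_i\in D_i$ whose (suitably conjugated) product is the square of some $g$, with $\langle g,h_1,\dots,h_s\rangle$ transitive and primitive, and with the covering surjective on $\pi_1$; the proposed ``Jordan-type genericity criterion forcing $A_d$ or $S_d$'' is neither stated nor proved, and it is not clear how it would accommodate the rigid simultaneous constraints (all $h_i$ fixed-point-free involutions in case (3), the square condition, transitivity). The paper resolves exactly this with a concrete device: using Lemmas 4.2, 4.3 and 4.5 of \cite{EKS} it merges the partitions step by step so that the final product $\gamma_{12\dots s-1}\gamma_{s}$ is a $(d-1)$-cycle. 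Since $d$ is even, $d-1$ is odd, so this cycle is the square of an explicit permutation (Example \ref{square}), which provides the image of $a$; and any transitive group containing a $(d-1)$-cycle is primitive (Example \ref{e1}), which certifies indecomposability via Proposition \ref{yo}. This single trick meets the square constraint and the primitivity requirement at once, with no need to reach $A_d$ or $S_d$; case (3) then requires the separate bookkeeping of Proposition \ref{d>4t>2} (cases on the parity of $d/2$ and of $s$), which is precisely where the hypotheses $d>4$ and $s>2$ enter. Without an argument of this kind, your proposal establishes only half of the theorem.
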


 In Proposition 2.6\cite{BN&GDL} the authors classify admissible data realizable by decomposable primitive branched coverings, for $\chi(N) \leq 0$. Notice that the same proof applies when $\chi(N)=1$, i.e. $N=\mathbb{R}P^{2}$.
Then, except for the case $d=2$, where the branched covering clearly is never decomposable, we can assume in Theorem \ref{d par} that $\mathscr{D}$ is  realizable by decomposable primitive branched coverings. This is, if we join the main result in this paper with Proposition 2.6\cite{BN&GDL} we characterize admissible data realizable by both, decomposable and indecomposable primitive branched coverings.

 The case where $N=\mathbb{R}P^{2}$ and $d$ odd looks   more subtle and it is a work in progress.

The paper is divided into two sections apart from the introduction.  In Section 1, we quote the main
definitions and the results about realization of branched coverings over  $\mathbb{R}P^{2}$.  In Section 2, we characterize
branch data realizable by indecomposable
branched coverings with even degree.

\section{Preliminaries, terminology and notation}

\subsection{Permutation groups}
We denote by $\Sigma_{d}$ the symmetric group on a set $\Omega$ with $d$ elements and
 by $1_{d}$ its identity element. If $\alpha \in \Sigma_{d}$ and $x \in
 \Omega$, $x^{\alpha}$ is the image of $x$ by $\alpha$. An explicit permutation $\alpha$ will be written either 
as a product of disjoint cycles, i.e. its \emph{cyclic decomposition},  or in the following way:
 \begin{displaymath}
 \mathbf{\alpha=} 
\left( \begin{array}{ccccccc}
1 & 2& \dots & 2k+1\\ 
1^{\alpha} &2^{\alpha}& \dots& (2k+1)^{\alpha}
\end{array} \right),
\end{displaymath}   
  depending   on  what is more   convenient. The set of lengths of the
 cycles in the cyclic decomposition of $\alpha$, including the trivial ones, defines a partition of $d$, say
 $D_{\alpha}=[d_{\alpha_{1}},\dots,d_{\alpha_{t}}]$,  called 
 \emph{the cyclic structure of} $\alpha$. Define
 $\nu(\alpha):=\sum_{i=1}^{t}(d_{\alpha_{i}}-1)$, then $\alpha$ will be an \emph{even permutation} if $\nu(\alpha) \equiv 0 \pmod{2}$.
Given  a partition $D$ of $d$, we say  $\alpha \in
D$ if the cyclic structure of $\alpha$ is
$D$ and  we put $\nu(D):=\nu(\alpha)$.

For $1<r \leq d$, a permutation $\alpha \in \Sigma_{d}$ is called a
$r$-\emph{cycle} if in its cyclic decomposition its unique non-trivial cycle
has 
length $r$. Permutations $\alpha,\beta \in \Sigma_{d}$ are \emph{conjugate}
if there is $\lambda \in \Sigma_{d}$  such that
$\alpha^{\lambda}:=\lambda \alpha \lambda^{-1}=\beta$. It is a known fact that
conjugate permutations have the same cyclic structure.  

  Given a permutation group $G$ on $\Omega$  and $x \in \Omega$,  one  defines the {\it isotropy subgroup of $x$},
 $G_{x}:=\{g \in G: x^{g}=x\}$ , and the {\it orbit of $x$ by $G$}, $x^{G}:=\{x^{g}:g\in G\}$. For $H \subset G$, the subsets $Supp(H):=\{x
 \in \Omega: x^{h} \neq x \textrm{ for some $h \in H$}\}$ and
 $Fix(H):=\{x \in \Omega:x^{h}=x \textrm{ for all $h \in H$}\}$ are defined. For
 $\Lambda \subset \Omega$ and $g \in G$,
 $\Lambda^{g}:=\{y^{g}:y \in \Lambda\}$.
 
  The permutation  group $G$ is 
 \emph{transitive} if for all
 $x,y \in \Omega$ there is $g \in G$ such that
 $x^{g}= y$.
 A nonempty subset $\Lambda \subset \Omega$ is a \emph{block} of
 a transitive group $G$ if
 for each $g \in G$ either $\Lambda^{g} = \Lambda$ or
 $\Lambda^{g} \cap \Lambda =\emptyset$. A block $\Lambda$ is
 \emph{trivial} if either $\Lambda =\Omega$ or $\Lambda=\{ x \}$ for some $x \in
 \Omega$. Given a block $\Lambda$ of $G$, the set
 $\Gamma:=\{\Lambda^{\alpha}:\alpha \in G\}$ defines a partition of $\Omega$
 in blocks. This set is called \emph{a system of blocks
 containing} $\Lambda$ and the cardinality of $\Lambda$ divides the cardinality of $\Omega$. $G$ acts naturally on $\Gamma$.
 A transitive permutation group is
 \emph{primitive} 
 if it  admits  only  trivial blocks. Otherwise it is
 \emph{imprimitive}.
 
\begin{Ex}\label{e1}
A transitive permutation group $G<\Sigma_{d}$ containing a  $(d-1)$-cycle is 
  primitive.  Without loss of generality let us suppose that
 $g=(1\dots d-1)(d) \in G$. Then   
 any  proper subset $\Lambda$ of $\{1,\dots,d\}$ containing $d$ and at least one more element satisfies $\Lambda^{g}\neq \Lambda$ and
$\Lambda^{g}\cap \Lambda \neq \emptyset$. Thus the blocks
of $G$ are
trivial and $G$ is primitive.
\end{Ex}

\begin{Prop}[\cite{DM}, Cor. 1.5A]\label{dixon}
Let $G$ be a transitive permutation group on a set $\Omega$ with at least two
points. Then $G$ is primitive if and only if each isotropy subgroup $G_{x}$,
for $x \in \Omega$,
is a maximal subgroup of $G$. \qed
\end{Prop}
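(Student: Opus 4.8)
The plan is to establish a Galois-type correspondence between the blocks of $G$ containing a fixed point and the subgroups of $G$ lying between a point stabilizer and $G$ itself, and then to read off the equivalence from this correspondence. Fix $x \in \Omega$. For a block $\Lambda$ with $x \in \Lambda$ introduce the setwise stabilizer $G_{\Lambda}:=\{g \in G : \Lambda^{g}=\Lambda\}$, which is a subgroup of $G$; conversely, for a subgroup $H$ with $G_{x} \leq H \leq G$ introduce the subset $x^{H}:=\{x^{h}:h \in H\}$. First I would check that both assignments are well defined. If $\Lambda$ is a block containing $x$ and $g \in G_{x}$, then $x=x^{g} \in \Lambda \cap \Lambda^{g}$, so the block property forces $\Lambda^{g}=\Lambda$, giving $G_{x} \leq G_{\Lambda} \leq G$. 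If instead $G_{x} \leq H \leq G$, then $x^{H}$ is a block: the hypothesis $(x^{H})^{g} \cap x^{H} \neq \emptyset$ yields $x^{h_{1}g}=x^{h_{2}}$ for some $h_{1},h_{2} \in H$, whence $h_{1}gh_{2}^{-1} \in G_{x} \leq H$ and therefore $g \in H$, so that $(x^{H})^{g}=x^{H}$.

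Next I would verify that the two assignments are mutually inverse, which is where transitivity enters. For a subgroup $H$ with $G_{x} \leq H \leq G$, every $h \in H$ satisfies $(x^{H})^{h}=x^{H}$, so $H \leq G_{x^{H}}$; conversely if $g \in G_{x^{H}}$ then $x^{g} \in (x^{H})^{g}=x^{H}$, so $x^{g}=x^{h}$ for some $h \in H$ and $gh^{-1} \in G_{x} \leq H$ gives $g \in H$, whence $G_{x^{H}}=H$. For a block $\Lambda$ containing $x$, the inclusion $x^{G_{\Lambda}} \subseteq \Lambda$ is immediate; for the reverse inclusion, given $y \in \Lambda$ transitivity of $G$ provides $g \in G$ with $x^{g}=y$, and then $y \in \Lambda \cap \Lambda^{g}$ forces $\Lambda^{g}=\Lambda$, so $g \in G_{\Lambda}$ and $y=x^{g} \in x^{G_{\Lambda}}$; thus $x^{G_{\Lambda}}=\Lambda$. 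Both assignments are clearly inclusion-preserving, so they provide an inclusion-preserving bijection between the set of blocks of $G$ containing $x$ and the interval of subgroups $\{H : G_{x} \leq H \leq G\}$.

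Finally I would identify the extremes of this correspondence. The singleton block $\{x\}$ has setwise stabilizer $G_{x}$, and the block $\Omega$ has setwise stabilizer $G$; these are precisely the two trivial blocks containing $x$, matched with the two endpoints $G_{x}$ and $G$ of the subgroup interval. Hence $G$ admits no nontrivial block containing $x$ if and only if the only subgroups between $G_{x}$ and $G$ are $G_{x}$ and $G$, i.e. if and only if $G_{x}$ is maximal in $G$. Since $G$ is transitive, the translates $\{\Lambda^{\alpha}:\alpha \in G\}$ of any block partition $\Omega$, so one of them contains $x$; as translation preserves (non)triviality, $G$ has a nontrivial block at all if and only if it has a nontrivial block containing $x$. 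Likewise every stabilizer $G_{y}$ is conjugate to $G_{x}$, so $G_{x}$ is maximal if and only if every $G_{y}$ is maximal. Combining these remarks with the previous equivalence yields the statement. The main point to handle carefully is the well-definedness and mutual inversion of the correspondence — in particular the use of transitivity to recover a whole block $\Lambda$ as the orbit $x^{G_{\Lambda}}$ — since the remaining passage to the maximality condition is a purely formal reading-off.
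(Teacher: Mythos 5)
Your proof is correct and complete: the block/overgroup Galois correspondence ($\Lambda \mapsto G_{\Lambda}$, $H \mapsto x^{H}$) is established with all the necessary checks (well-definedness, mutual inversion via transitivity, the identification of trivial blocks with the endpoints $G_{x}$ and $G$, and the reduction to a single base point via conjugacy of stabilizers). The paper itself gives no proof of this proposition --- it is quoted from Dixon and Mortimer (\cite{DM}, Cor.\ 1.5A) with only a citation --- and your argument is precisely the standard proof found in that reference, so there is no genuine divergence to report.
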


\subsection{Branched coverings on the projective plane}\label{pbc}
A surjective continuous open map $\phi:M \lra N$ between closed surfaces such 
that:
\begin{itemize}
\item for $x \in N$, $\phi^{-1}(x)$ is a totally disconnected set, and
\item there is  a non-empty discrete set $B_{\phi} \subset N$  such that the 
restriction $\hat{\phi}:=\phi|_{M-\phi^{-1}(B_{\phi})}$ is an ordinary
unbranched 
covering of degree $d$,
\end{itemize}
is called a \emph{branched covering of degree d over N}
 and it is  denoted  by
$(M,\phi,N,B_{\phi},d)$.  $N$ is \emph{the base surface}, $M$ is
\emph{the covering surface} and $B_{\phi}$ is \emph{the branch point set}.
 Its \emph{associated unbranched covering} is denoted by 
$(\widehat{M},\hat{\phi},\widehat{N},d)$, where
$\widehat{N}:=N-B_{\phi}$ and $\widehat{M}:=M-\phi^{-1}(B_{\phi})$. It is known that
$\chi(\widehat{M})=d\chi(\widehat{N})$, equivalently
\begin{eqnarray}\label{cE}
 \chi(M)-\#\phi^{-1}(B_{\phi})=d(\chi(N)-\#B_ {\phi}).
\end{eqnarray}
The set $B_{\phi}$ is  just
the image of the points in $M$ in which $\phi$ fails to be a local
homeomorphism. Then each $x \in B_{\phi}$ determines a non-trivial
partition $D_{x}$ of $d$, defined by the local degrees of $\phi$ on each component
in the preimage of a small disk $U_{x}$ around  $x$, with
$U_{x}\cap B_{\phi}=\{x\}$. The collection $\mathscr{D}:=\{D_{x}\}_{x \in B_{\phi}}$ is called
 \emph{the branch data} and its  \emph{total defect} is the positive integer
defined by 
$\nu({\mathscr{D}}):=\sum_{x \in B_{\phi}} \nu(D_{x})$. The total defect  satisfies
the \emph{Riemann-Hurwitz formula} (see \cite{EKS}): 
\begin{eqnarray}\label{rhf}
\nu(\mathscr{D})=d \chi(N)-\chi(M).
\end{eqnarray}
Associated to $(M,\phi,N,B_{\phi},d)$ we have a permutation group, {\it the monodromy group of $\phi$}, given by the
image of the \emph{Hurwitz's representation}
\begin{eqnarray}\label{Hr}
 \rho_{\phi}: \pi_{1}(N-B_{\phi},z) \lra \Sigma_{d},
 \end{eqnarray}
which  sends each class $\alpha \in \pi_{1}(N-B_{\phi},z)$ to a permutation of $\phi^{-1}(z)=\{z_{1},
\dots,z_{d}\}$, which indicates the terminal point of the lifting of a loop
in $\alpha$ after fixing the initial point. In particular,  for $x \in B_{\phi}$, let $c_{x}$
be a path
 from $z$ to a small circle $a_{x}$ about $x$ and define the loop class 
 $\mathbf{u}_{x}:=[c_{x}a_{x}c_{x}^{-1}]$. Then the cyclic structure of the permutation 
 $\alpha_{x}:=\rho_{\phi}(\mathbf{u}_{x})$ is given by $D_{x}$ and 
$\nu(\prod_{x \in B_{\phi}} \alpha_{x}) \equiv \nu(\mathscr{D}) \pmod{2}$.
The problem of realization of a branch data  is equivalent to an algebraic problem in term of representation on the symmetric group.
More precisely:
 \begin{theo}[See \cite{Hu}]
 Let $N$ be a surface,  $\mathscr{D}$ a finite collection of partitions of $d$ and $F\subset N$ such that $\# F= \#\mathscr{D}$. If it is possible to define a representation $\pi_{1}(N-F,z) \lra \Sigma_{d}$ like $\rho_{\phi}$, then $\mathscr{D}$ is realizable as branch datum of a branched covering on $N$.\qed
\end{theo}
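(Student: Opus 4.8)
The plan is to realize $\mathscr{D}$ by first converting the given representation into an unbranched covering of the punctured surface and then filling in the deleted points one at a time. Write $\widehat{N} := N - F$ and let $\rho: \pi_{1}(\widehat{N}, z) \to \Sigma_{d}$ be the representation, which I take to be ``like $\rho_{\phi}$'' in the sense that its image is a transitive subgroup (so the covering we build is connected) and that for each $x \in F$ the corresponding loop $\mathbf{u}_{x}$ satisfies $\rho(\mathbf{u}_{x}) = \alpha_{x}$ with cyclic structure $D_{x}$. By the classical classification of covering spaces, the transitive action of $\pi_{1}(\widehat{N},z)$ on $\Omega = \{1,\dots,d\}$ determined by $\rho$ yields a connected $d$-fold covering $p: \widehat{M} \to \widehat{N}$; explicitly one may take $\widehat{M} = \widetilde{N} \times_{\pi_{1}} \Omega$, the quotient of $\widetilde{N} \times \Omega$ by the diagonal action of the deck transformations and of $\rho$. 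This step uses only standard covering theory and presents no difficulty.

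Next I analyze the covering near each deleted point. Fix $x \in F$ and a small open disk $U_{x} \subset N$ with $U_{x} \cap F = \{x\}$, so that $U_{x} - \{x\}$ is a punctured disk with $\pi_{1}(U_{x} - \{x\}) \cong \mathbb{Z}$ generated, after the path $c_{x}$, by $\mathbf{u}_{x}$. The restriction of $p$ over $U_{x} - \{x\}$ is a covering classified by the action of $\langle \alpha_{x} \rangle$ on $\Omega$: its connected components are in bijection with the orbits of $\alpha_{x}$, that is with the cycles of $\alpha_{x}$, and a cycle of length $\ell$ produces a component $V$ which is a connected $\ell$-fold cover of the punctured disk. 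Since every connected degree-$\ell$ covering of a punctured disk is equivalent to the model $w \mapsto w^{\ell}$, each such $V$ is again a punctured disk.

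I then fill in the branch points. For each $x$ and each component $V$ as above I adjoin a single point $\tilde{x}_{V}$ to $V$, producing a disk, and extend $p$ by setting $\phi(\tilde{x}_{V}) = x$; in the local model $w \mapsto w^{\ell}$ this extension is continuous and open with local degree $\ell$ at $\tilde{x}_{V}$. Carrying this out over all $x \in F$ adds only finitely many points and yields a space $M$ with a map $\phi: M \to N$ extending $p$. It remains to verify that $M$ is a closed surface and that $\phi$ is a branched covering with data $\mathscr{D}$: the space $M$ is a surface because it is obtained from the surface $\widehat{M}$ by gluing disks along the boundary circles of the filled ends, and it is compact, equivalently $\phi$ is proper, because $\phi$ has finite fibers, restricts to a finite covering over $N$ away from the finitely many branch points, and is modeled by $w \mapsto w^{\ell}$ near each $\tilde{x}_{V}$. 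Finally $\phi$ is open with totally disconnected finite fibers, its branch set is exactly $F$, and by construction the local degrees over $x$ are precisely the cycle lengths of $\alpha_{x}$, so the partition read off at $x$ is $D_{x}$ and the branch data of $\phi$ equals $\mathscr{D}$.

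The main obstacle is the local model at the punctures: one must argue rigorously that the component $V$ of $p^{-1}(U_{x} - \{x\})$ attached to an $\ell$-cycle of $\alpha_{x}$ is, up to equivalence, the $\ell$-fold cyclic cover $w \mapsto w^{\ell}$ of the punctured disk, so that adjoining one point yields a disk on which $\phi$ is open with local degree exactly $\ell$. This is where the combinatorial datum, the cycle type $D_{x}$, is translated into geometric branching behaviour, and where continuity, openness, and the surface structure of $M$ at the new points must all be confirmed at once.
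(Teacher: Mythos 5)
The paper itself gives no argument for this statement: it is quoted as a known realization result, with the proof delegated to Husemoller \cite{Hu}. Your construction is essentially the classical argument behind that citation, and it is correct: the action of $\pi_{1}(N-F,z)$ on $\{1,\dots,d\}$ determined by $\rho$ classifies a $d$-fold covering $p:\widehat{M}\to N-F$; over a punctured disk $U_{x}-\{x\}$ the components of $p^{-1}(U_{x}-\{x\})$ correspond to the cycles of $\alpha_{x}=\rho(\mathbf{u}_{x})$, and the component attached to an $\ell$-cycle is the unique connected $\ell$-fold cover of the punctured disk (unique because $\mathbb{Z}$ has exactly one subgroup of index $\ell$), hence is equivalent to the model $w\mapsto w^{\ell}$; adjoining one point per component then yields $M$ and $\phi$ realizing $\mathscr{D}$. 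Two points deserve tightening. First, for $M$ to be a closed \emph{connected} surface, which is what the paper's notion of branched covering requires, the image of $\rho$ must be transitive; you correctly fold this into the meaning of ``like $\rho_{\phi}$'', and this is consistent with the paper, since the Hurwitz representation of a connected covering has transitive image. Second, compactness and the Hausdorff property of $M$ are most cleanly seen directly rather than via properness: $M$ is the union of the $\phi$-preimage of the complement of the open disks $U_{x}$, which is a finite covering of a compact surface with boundary and hence compact, together with finitely many closed disks obtained by the filling; and the finitely many adjoined points lie in pairwise disjoint components, so they admit disjoint neighborhoods. With these remarks your proof is complete and supplies exactly what the paper leaves to the reference.
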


\begin{Rm}
If $N=\mathbb{R}P^{2}$ and $\mathscr{D}=\{D_{1},\dots,D_{s}\}$,
to define $\rho_{\phi}$, it is necessary and sufficient to have  permutations $\alpha_{i} \in D_{i}$, for $i=1,\dots,s$, such that $\prod_{i=1}^{s}\alpha_{i}$ is a square. This follows  from the presentation 
$\pi_{1}(\mathbb{R}P^{2}-\{x_{1},\dots,x_{s}\})=\langle a, \mathbf{u_{1}},\dots,\mathbf{u_{s}} \vert  \prod_{i=1}^{s} \mathbf{u_{i}}=a^{-2}\rangle$.
\end{Rm}
\begin{Ex}\label{square}
If $r>0$ is an odd natural number then every $r-$cycle is the square of a permutation: 

if $\alpha=(a_{1}\;a_{2}\dots a_{r})$ then $\alpha=\beta^{2}$ where $\beta=(a_{1}\;a_{(\frac{r+1}{2})+1}\; a_{2}\; a_{(\frac{r+1}{2})+2}\dots a_{r}\; a_{\frac{r+1}{2}})$.
\end{Ex}

The brach data which can be realized by branched coverings over $\mathbb {R}P^2$    are given by: 
\begin{theo}[See \cite{EKS}] 
Let $\mathscr{D}$ be a collection of partitions of $d$. Then there is a branched covering $\phi:M\rightarrow \mathbb{R}P^{2}$ of degree $d$, with M connected and with branch data $\mathscr{D}$ if and only if 
\begin{eqnarray}\label{hcpp}
d-1\leq \nu(\mathscr{D})\equiv 0 \pmod{2}.
\end{eqnarray} Moreover, $M$ can be chosen to be nonorientable.
\qed
\end{theo}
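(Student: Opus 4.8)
The plan is to translate the topological statement into the algebra of the monodromy representation \eqref{Hr} and argue each direction separately. By the Remark following \eqref{Hr}, producing a branched covering over $\mathbb{R}P^{2}$ with branch data $\mathscr{D}=\{D_{1},\dots,D_{s}\}$ amounts, via Hu's theorem, to choosing permutations $\alpha_{i}\in D_{i}$ together with $\beta\in\Sigma_{d}$ such that $\prod_{i=1}^{s}\alpha_{i}=\beta^{-2}$, the relation coming from the presentation of $\pi_{1}(\mathbb{R}P^{2}-\{x_{1},\dots,x_{s}\})$ with $\beta=\rho_{\phi}(a)$. Moreover $M$ is connected exactly when the monodromy group $\langle\beta,\alpha_{1},\dots,\alpha_{s}\rangle$ is transitive, and in that case \eqref{rhf} fixes $\chi(M)=d-\nu(\mathscr{D})$. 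Thus the whole statement reduces to a purely combinatorial existence problem in $\Sigma_{d}$.

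For necessity, suppose such a $\phi$ exists. Since $\prod_{i}\alpha_{i}=\beta^{-2}$ is a square it is an even permutation; as the sign is multiplicative and $\mathrm{sgn}(\alpha)=(-1)^{\nu(\alpha)}$, I get $\nu(\mathscr{D})=\sum_{i}\nu(D_{i})\equiv\nu\bigl(\prod_{i}\alpha_{i}\bigr)\equiv 0\pmod{2}$, which is the parity half of \eqref{hcpp}. For the lower bound I use \eqref{rhf}: $\nu(\mathscr{D})=d\,\chi(\mathbb{R}P^{2})-\chi(M)=d-\chi(M)$. A connected closed nonorientable surface has $\chi\le 1$ (the maximum $1$ being attained only by $\mathbb{R}P^{2}$); since here $M$ is nonorientable (the case the theorem addresses, cf. its final assertion, as opposed to the sphere), $\chi(M)\le 1$ and hence $\nu(\mathscr{D})\ge d-1$.

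For sufficiency I must, conversely, build the $\alpha_{i}\in D_{i}$ and $\beta$ out of the two conditions in \eqref{hcpp}. Recall that a permutation is a square precisely when, for every even $k$, it has an even number of $k$-cycles. The plan is to exploit the conjugation freedom inside each class $D_{i}$ to arrange simultaneously that (a) $\gamma:=\prod_{i}\alpha_{i}$ satisfies this parity-of-cycles condition, so that a square root $\beta$ with $\beta^{-2}=\gamma$ exists, and (b) the group $\langle\beta,\alpha_{1},\dots,\alpha_{s}\rangle$ is transitive. For (a) one treats odd cycle-lengths using Example \ref{square} (every odd cycle is already a square) and pairs up the even-length cycles so that they jointly contribute a square; the hypothesis $\nu(\mathscr{D})\equiv 0\pmod{2}$ is exactly what makes such a global pairing possible. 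For (b) one chains the supports of the successive $\alpha_{i}$, overlapping them one point at a time, so that the generated group becomes transitive; the inequality $\nu(\mathscr{D})\ge d-1$ supplies precisely the amount of overlap a connected graph on $d$ vertices requires ($d-1$ edges) to link all $d$ sheets. Finally one checks the realization can be taken with $M$ nonorientable, e.g. by making the orientation character nontrivial on the monodromy, which is compatible with the bound already forcing $\chi(M)\le 1$.

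The main obstacle is step (b) in the extremal case, where $\nu(\mathscr{D})$ meets its minimum ($d-1$ for $d$ odd, and $d$ for $d$ even, forced by parity): there is almost no slack, so the prescribed cycle types $D_{i}$, the square condition, and transitivity must all be reconciled at once. A clean way to organize this is to pass to the orientation double cover $S^{2}\to\mathbb{R}P^{2}$, under which a realization corresponds to realizing the augmented data $\{D_{1},\dots,D_{s},E\}$ over $S^{2}$, where $E$ is the cycle type of the chosen square $\beta^{2}$. Since $\beta$ is free, I may take $\nu(\beta^{2})$ maximal ($d-1$ if $d$ is odd, $d-2$ if $d$ is even); the sphere Hurwitz inequality $\nu(\mathscr{D})+\nu(E)\ge 2(d-1)$ then reduces exactly to $\nu(\mathscr{D})\ge d-1$ (resp. $\ge d$), and the extra long-cycle factor $E$ is what steers the construction around the exceptional non-realizable configurations that occur over $S^{2}$. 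Verifying that this choice avoids every such exceptional case while simultaneously delivering transitivity is the technical heart of the argument.
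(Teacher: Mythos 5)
First, a point of comparison: the paper offers no proof of this statement at all --- it is quoted from \cite{EKS} and stated with an end-of-proof mark, the substance of the ``if'' direction being the main realization theorem of that reference. So your attempt must stand on its own, and it does not. The necessity half is essentially sound: parity follows from the sign homomorphism since $\prod_{i}\alpha_{i}=\beta^{-2}$ is a square, and the lower bound follows from (\ref{rhf}) once $\chi(M)\le 1$. But note that your restriction to nonorientable $M$ at that point is not an innocent reading: the ``only if'' clause is false for arbitrary connected $M$. Composing $z\mapsto z^{2}$ on $S^{2}$ with the antipodal quotient $S^{2}\to\mathbb{R}P^{2}$ is a degree-$4$ branched covering realizing the datum $\{[2,2]\}$, with $\nu(\mathscr{D})=2<d-1=3$ (and $\nu$ even). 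So nonorientability of $M$ must be built into the statement being proved, not invoked in passing as ``the case the theorem addresses.''

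The genuine gap is the sufficiency half, which is where all of the work in \cite{EKS} lies, and your sketch defers exactly that work. The claim that $\nu(\mathscr{D})\equiv 0\pmod{2}$ ``is exactly what makes such a global pairing possible'' is unsupported: evenness of $\prod_{i}\alpha_{i}$ is automatic from the sign homomorphism, but being a square (evenly many $k$-cycles for each even $k$) is a genuine condition on the cycle structure of the product, and controlling the cycle structure of a product of permutations with prescribed conjugacy classes, while simultaneously forcing transitivity of the monodromy group, is precisely the hard combinatorial problem. It is solved in \cite{EKS} by a chain of technical lemmas (Lemmas 4.2, 4.3 and 4.5 there --- the very ones this paper invokes repeatedly in Section 2); nothing in your outline replaces them, and you concede as much when you call the decisive verification ``the technical heart of the argument.'' Moreover, your organizing device is incorrect as stated: pulling back along the orientation double cover turns a realization of $\{D_{1},\dots,D_{s}\}$ over $\mathbb{R}P^{2}$ with $M$ nonorientable into a realization of $\{D_{1},D_{1},\dots,D_{s},D_{s}\}$ over $S^{2}$ admitting a free involution, not of $\{D_{1},\dots,D_{s},E\}$; algebraically, the relation $a^{2}\prod_{i}\mathbf{u_{i}}=1$ makes the extra class occur twice, with both entries equal to the same permutation $\beta$, which is strictly stronger than sphere realizability of $\{E,E,D_{1},\dots,D_{s}\}$. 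And even granting a reduction to $S^{2}$, realizability there is not governed by the Hurwitz conditions alone (exceptional data exist), so the reduction trades the problem for a notoriously harder one. As it stands, the proposal is a plan plus an admission that its key steps are unverified, not a proof.
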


\begin{Rm} The realization result above does not tell which  branch data can be realized by an orientable covering. In fact it is not hard to show that there is 
a bijection between the set of branched coverings over $\mathbb{R}P^2$  where the covering surface is orientable and the  set of branched  coverings over the sphere $S^2$  which have an even number of branched points. It is certainly an interesting problem to classify  such  realizable  brached data over the  sphere $S^2$ from the viewpoint of 
decomposibility.
\end{Rm}

\begin{definition}
A collection of partitions $\mathscr{D}$ of $d$ satisfying $(\ref{hcpp})$ will be called {\it admissible datum}.
\end{definition}

\section{Decomposability}
Given a covering, it is
\emph{decomposable} if it can be written as a composition of two non-trivial
coverings (i.e., both with degree bigger than 1), otherwise it is called
\emph{indecomposable}. In a decomposition of a branched
covering at least one of its components is a branched covering having proper branching.
 Moreover,
since the degree of a decomposable covering is the product of the degrees of
its components (see \cite{BBZ}, theorem 2.3), we are interested in branched
coverings with non-prime degree.

\begin{Prop}[See \cite{BN&GDL}]\label{yo}
A primitive branched covering
 is decomposable if and only if its monodromy
group is imprimitive. \qed
\end{Prop}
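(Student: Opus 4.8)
The plan is to translate the topological notion of decomposability into the subgroup lattice of the monodromy group and then invoke Proposition \ref{dixon}. Write $d=\deg\phi$, $B=B_{\phi}$, $\widehat{N}=N-B$, $\widehat{M}=M-\phi^{-1}(B)$, and let $\hat\phi\colon\widehat M\lra\widehat N$ be the associated unbranched covering, with monodromy $\rho=\rho_{\phi}\colon\pi_{1}(\widehat N,z)\lra\Sigma_{d}$ acting on the fiber $\{z_{1},\dots,z_{d}\}$. Its image is the monodromy group $G$, which is transitive precisely because $M$ is connected; the stabiliser of $z_{1}$ is $G_{z_{1}}$, and $H:=\rho^{-1}(G_{z_{1}})\leq\pi_{1}(\widehat N,z)$ is the index-$d$ subgroup classifying $\hat\phi$. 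Since point stabilisers are conjugate in the transitive group $G$, maximality of $G_{z_{1}}$ does not depend on the chosen point.

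First I would set up the covering-space dictionary on the punctured surfaces. By the Galois correspondence, connected coverings $X$ sitting between $\widehat M$ and $\widehat N$ correspond bijectively to subgroups $H\leq K\leq\pi_{1}(\widehat N,z)$ (all containing $\ker\rho$), hence via $\rho$ to subgroups $G_{z_{1}}\leq\rho(K)\leq G$; the two intermediate maps are nontrivial (both degrees $>1$) exactly when the inclusions are strict on both sides, i.e. $G_{z_{1}}<\rho(K)<G$. By Proposition \ref{dixon}, such a strictly intermediate subgroup exists if and only if $G_{z_{1}}$ fails to be maximal, that is, if and only if $G$ is imprimitive. Thus the proposition reduces to matching genuine decompositions of the branched covering $\phi$ with strictly intermediate coverings of $\hat\phi$.

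For the forward implication I would start from a decomposition $\phi=\psi\circ\eta$ with $\eta\colon M\lra M'$ and $\psi\colon M'\lra N$ of degrees $>1$. The first observation is that $B_{\psi}\subseteq B_{\phi}$: if $\psi$ failed to be a local homeomorphism at a point, so would $\phi=\psi\circ\eta$ over its image. Consequently $\psi$ restricts to an ordinary covering over $\widehat N$, and since $\phi$ is unbranched there so is $\eta$ over $\psi^{-1}(\widehat N)$; setting $X:=\psi^{-1}(\widehat N)$ gives nontrivial intermediate coverings $\widehat M\lra X\lra\widehat N$, whence $G_{z_{1}}$ is not maximal and $G$ is imprimitive. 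Conversely, if $G$ is imprimitive, Proposition \ref{dixon} furnishes $G_{z_{1}}<\bar K<G$; putting $K=\rho^{-1}(\bar K)$ produces a connected intermediate unbranched covering $\widehat M\lra X\lra\widehat N$, and I would then extend it across the punctures by Fox completion, filling each puncture to obtain closed surfaces and branched coverings $\eta\colon M\lra M'$, $\psi\colon M'\lra N$ with $\phi=\psi\circ\eta$ and both degrees $>1$.

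The main obstacle is the bookkeeping of branch points when passing between the punctured and the closed pictures. In the forward direction one must make sure that restricting over $\widehat N$ really yields honest unbranched coverings of the expected degrees (this is where $B_{\psi}\subseteq B_{\phi}$ is used), and in the converse direction one must check that the Fox completion of $X\lra\widehat N$ is again a branched covering of a closed surface whose factors are nontrivial, with $M'$ connected (inherited from connectedness of $X$). The hypothesis that $\phi$ is primitive (surjective on $\pi_{1}$) is what keeps these completions within the class of coverings under consideration and prevents a factor from degenerating; granting this, the chain of equivalences ``decomposable $\Leftrightarrow$ $G_{z_{1}}$ non-maximal $\Leftrightarrow$ $G$ imprimitive'' follows.
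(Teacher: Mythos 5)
The paper itself gives no proof of this proposition: it is quoted from \cite{BN&GDL} and stated with a \qed, so there is no internal argument to compare against. Your proof is correct and is the standard (and presumably the cited) argument: decompositions of $\phi$ correspond, via restriction over $N-B_{\phi}$ and Fox completion, to connected coverings strictly intermediate between $\widehat{M}$ and $\widehat{N}$, hence to subgroups strictly between the point stabiliser $G_{z_{1}}$ and $G$, and Proposition \ref{dixon} converts existence of such a subgroup into imprimitivity of $G$. One small caveat: your closing claim that primitivity of $\phi$ is ``what keeps these completions within the class'' is not accurate --- the equivalence only needs transitivity of $G$ (i.e.\ connectedness of $M$) and uniqueness of the Fox completion; primitivity appears in the statement because it delimits the class of coverings under study (a non-primitive covering is automatically decomposable), not because any step of the equivalence would fail without it.
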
 

Let $(M,\phi,\mathbb{R}P^{2},B_{\phi},d)$ be  primitive with branch data 
$\mathscr{D}=\{D_{1},\dots,D_{s}\}$, where $s:=\#B_{\phi}$. If $d$ is even, by 
(\ref{rhf}) and (\ref{hcpp}),  $\chi(M)$ is even and since $M$ is non-orientable
 $\chi(M)\leq 0$. Then by (\ref{cE}), $s>1$.
\begin{Prop}
Let $d>2$ be an even number and $\mathscr{D}=\{D_{1},\dots,D_{s}\}$ an admissible datum. If $\mathscr{D}$ contains a partition different of 
 $[2,\dots,2]$, then $\mathscr{D}$ can be realized by an indecomposable
 branched covering.
\end{Prop}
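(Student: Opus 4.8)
The plan is to produce, for the given datum, a realization whose monodromy group $G<\Sigma_{d}$ is \emph{primitive} as a permutation group. This already forces indecomposability: a branched covering over $\mathbb{R}P^{2}$ with primitive monodromy is necessarily a primitive covering, since otherwise, as recalled in the introduction, it factors through the orientation double cover $S^{2}\to\mathbb{R}P^{2}$, and the two sheets of that cover split each fibre into two blocks of size $d/2$; as $d>2$ these are non-trivial blocks, so the monodromy would be imprimitive. Being then a primitive covering with primitive monodromy, Proposition \ref{yo} shows it is indecomposable. To obtain primitivity I will use the criterion of Example \ref{e1}: a transitive subgroup of $\Sigma_{d}$ containing a $(d-1)$-cycle is primitive. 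Thus it suffices to exhibit permutations $\alpha_{i}\in D_{i}$ whose product $\gamma=\alpha_{1}\cdots\alpha_{s}$ is a $(d-1)$-cycle and such that at least one $\alpha_{i}$ moves the point fixed by $\gamma$.

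Granting such a choice, both realizability and primitivity follow at once. Since $d$ is even, $d-1$ is odd, so by Example \ref{square} the $(d-1)$-cycle $\gamma$ is a square; by the realization criterion for $\mathbb{R}P^{2}$ (the Remark following the Hurwitz representation) the $\alpha_{i}$ therefore define a representation $\rho_{\phi}$, and we may take $\rho_{\phi}(a)=\beta$ to be a $(d-1)$-cycle that is a square root of $\gamma^{-1}$, necessarily fixing the same point $p$ as $\gamma$. The monodromy group is $G=\langle\beta,\alpha_{1},\dots,\alpha_{s}\rangle$. It contains the $(d-1)$-cycle $\gamma$; it is transitive on $\Omega\setminus\{p\}$ because $\gamma$ is; and it is transitive on all of $\Omega$ because some $\alpha_{i}$ carries $p$ into that orbit. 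By Example \ref{e1}, $G$ is primitive, which by the first paragraph finishes the proof.

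It remains to carry out the combinatorial construction of the $\alpha_{i}$, and this is where the hypothesis that some partition differs from $[2,\dots,2]$ is used; relabel that index as $1$. The bookkeeping is favorable: the product has sign $(-1)^{\nu(\mathscr{D})}$, which is even by $(\ref{hcpp})$ and hence matches a $(d-1)$-cycle, while $\nu(\mathscr{D})\ge d$ (being even and at least $d-1$) exceeds the defect $d-2$ of a $(d-1)$-cycle by a non-negative even amount, so there is always exactly the right slack of defect to be cancelled in the product. Fixing $\gamma=(1\,2\,\cdots\,d-1)$ with $p=d$, I would choose the factors so that the cycles and free points are successively linked into one long cycle, using the part of $D_{1}$ that is not a $2$-cycle—either a part of length $\ge 3$ or a part equal to $1$—as the source of the needed flexibility. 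This is precisely what makes a single fixed point attainable in the product, whereas when every $D_{i}=[2,\dots,2]$ the product of fixed-point-free involutions always fixes an even number of points and can never be a $(d-1)$-cycle.

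The main obstacle is exactly this last step: proving uniformly, over all admissible shapes of $\mathscr{D}$ with some $D_{i}\neq[2,\dots,2]$, that representatives exist whose product is an exact $(d-1)$-cycle with its fixed point genuinely moved by some factor. The tight case is $\nu(\mathscr{D})=d$ with all but one partition equal to $[2,\dots,2]$, where essentially no defect may be wasted, so the single non-$[2,\dots,2]$ partition must simultaneously correct the parity of fixed points and connect all the remaining $2$-cycles and free points into one cycle. I expect to handle this by a short case analysis according to whether $D_{1}$ contains a part $\ge 3$ or a part equal to $1$, writing an explicit normal form for the factors in each case and verifying the product and the movement of $p$ by direct computation.
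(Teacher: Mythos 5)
Your overall framework is exactly the paper's: realize $\mathscr{D}$ by permutations whose product is a $(d-1)$-cycle, use Example \ref{square} (with $d-1$ odd) to produce the square root needed for the image of the generator $a$ of $\pi_{1}(\mathbb{R}P^{2}-B_{\phi})$, invoke Example \ref{e1} to get primitivity of the monodromy group, and conclude indecomposability via Proposition \ref{yo}. (Your preliminary observation that primitive monodromy forces the covering itself to be primitive, because a non-primitive covering factors through $S^{2}\to\mathbb{R}P^{2}$ and hence has blocks of size $d/2$, is a correct and worthwhile point; the paper leaves this implicit in its appeal to the realization results.) However, there is a genuine gap, and you have located it yourself: the existence of representatives $\alpha_{i}\in D_{i}$ whose product is a $(d-1)$-cycle, with some $\alpha_{i}$ moving the fixed point, is never proved. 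Everything in your last two paragraphs is a plan, not an argument ("I would choose\dots", "I expect to handle this\dots"). That existence statement is the technical heart of the proposition, and parity plus the inequality $\nu(\mathscr{D})\geq d$ is a necessary condition only, nowhere near sufficient: the defect of a product of permutations is not the sum of the defects (only its parity is controlled), so one must show that the excess $\nu(\mathscr{D})-(d-2)$ can actually be absorbed across the $s$ factors, whatever their shapes.

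The paper fills precisely this gap by an iterative construction resting on Lemmas 4.2, 4.3 and 4.5 of \cite{EKS}: it combines $D_{1}$ and $D_{2}$ into an intermediate cycle structure $D_{12}$, then $D_{12}$ with $D_{3}$, and so on, at each stage using Lemma 4.2 (which produces a prescribed number of orbits, thereby "storing" excess defect) or Lemma 4.3 (which produces a transitive pair with prescribed product defect), and at the final stage uses the hypothesis $D_{s}\neq[2,\dots,2]$ together with Lemma 4.5 to obtain a transitive pair whose product is a $(d-1)$-cycle. Your proposed "short case analysis according to whether $D_{1}$ contains a part $\geq 3$ or a part equal to $1$" underestimates the problem: the hard cases are not only the tight one $\nu(\mathscr{D})=d$, but also those with large excess defect spread over arbitrarily many partitions, each possibly equal to $[2,\dots,2]$, which is exactly what the inductive bookkeeping with the EKS lemmas is designed to handle. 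To complete your proof you would either have to cite those lemmas and run the same induction, or genuinely carry out a hands-on construction uniform in $s$, $d$ and the shapes of the $D_{i}$ — neither of which is present in the proposal.
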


\begin{proof}
Without loss of generality, let us suppose
$D_{s} \neq [2,\dots,2]$.
Since $d \leq \nu(\mathscr{D}) \equiv 0 \pmod{2}$, there is $q \geq 0$ such that 
$\nu(\mathscr{D})=d+2q$ and
$\nu(D_{1})+ \nu(D_{2})=d+2q-\sum_{i=3}^{s}\nu(D_{i})$. If
$t:=\sum_{i=3}^{s}\nu(D_{i})-2q$ is bigger than zero, applying  Lemma 4.2
   \cite{EKS},   
there are permutations $\gamma_{1} \in D_{1}$, $\gamma_{2} \in D_{2}$
such that $\langle \gamma_{1}, \gamma_{2} \rangle$ acts in $\{1, \dots, d\}$
with  $t$ orbits and 
\begin{eqnarray}\label{t1}
\nu(\gamma_{1}\gamma_{2})=d-t=d-\sum_{i=3}^{s}\nu(D_{i})+2q.
\end{eqnarray}
If $t \leq 0$, define $r:=1-t$ and  applying
Lemma    4.3 \cite{EKS},
 for  $k:=-(1-\sum_{i=3}^{s}\nu(D_{i}))\equiv r \pmod{2}$, there exist $\gamma_{1}
\in D_{1}$, $\gamma_{2} \in D_{2}$ such that
$\langle \gamma_{1},\gamma_{2} \rangle$ acts trasitively on
$\{1,\dots,d\}$ and
\begin{eqnarray}\label{t2}
\nu(\gamma_{1}\gamma_{2})=(d-1)-k=d-\sum_{i=3}^{s}\nu(D_{i}).
\end{eqnarray}
Let $D_{12}$ be the partition determined by the cyclic structure of
 $\gamma_{1}\gamma_{2}$. Then (\ref{t1})
 implies 
$\nu(D_{12})+\nu(D_{3})=d+2q-\sum_{i=4}^{s}\nu(D_{i})$ and we can repeat the
analysis done before. On the other hand, the situation in 
(\ref{t2})     implies that $\nu(D_{12})+\nu(D_{3})=d-\sum_{i=4}^{s}\nu(D_{i})$ and
since $\sum_{i=4}^{s} \nu(D_{i})>0$, by Lemma    4.2 \cite{EKS}, there are
$\gamma_{12} \in D_{12}$, $\gamma_{3}\in D_{3}$ such that $\langle
\gamma_{12},\gamma_{3} \rangle$ acts with  $\sum_{i=4}^{s}\nu(D_{i})$ orbits
and 
$\nu(\gamma_{12}\gamma_{3})=d-\sum_{i=4}^{s}\nu(D_{i})$.
It is clear that repeating the analysis done before we will obtain one of the following
conditions: 
\begin{eqnarray}\label{t3}
 \nu(D_{12 \dots t-1})+ \nu(D_{s})=\left\{ \begin{array}{ll}
d+2q
& \textrm{ applying Lemma 4.2} \cite{EKS},
\\  
d
& \textrm{ applying Lemma 4.3} \cite{EKS},
\end{array}\right.
\end{eqnarray} 
where $D_{12 \dots j}$ denotes the partition determined by
 $\gamma_{1 \dots j-1}. \gamma_{j}$, where $\gamma_{1 \dots j-1} \in D_{1
  \dots 
j-1}$ and $\gamma_{j} \in D_{j}$ is obtained by succesive
applications of Lemmas 4.2\cite{EKS} and 4.3\cite{EKS}, for
$j=2,\dots,s-1$ .
Whichever the case, we are under the hypothesis of Lemma 4.5 \cite{EKS} then, since  
$D_{s} \neq [2, \dots,2]$, there are permutations $\gamma_{12 \dots s-1} \in 
D_{12 \dots s-1}$, $\gamma_{s} \in D_{s}$ such that the group  $\langle
\gamma_{12 
  \dots s-1}, \gamma_{s} \rangle$ acts transitively  on $\{1, \dots,d\}$ and
the product $\gamma_{12 \dots s-1}. \gamma_{s}$ is a $d-1$ cycle. Moreover by Example \ref{square}, there is a 
 permutation $\alpha \in \Sigma_{d}$ such that
$\gamma_{12 \dots 
  s-1}.\gamma_{s}=\alpha^{2}$ and the permutation group  $\langle \gamma_{12
  \dots s-1},\gamma_{s} \rangle$ is primitive by Example \ref{e1}. 
On the other hand, for $j=2,\dots,s-1$, there are 
$\lambda_{1},\dots,\lambda_{j} \in \Sigma_{d}$
such that
$\gamma_{12\dots j}=\gamma_{1}^{\lambda_{1}}\gamma_{2}^{\lambda_{2}} \dots
\gamma_{j}^{\lambda_{j}}$ (recall that
$\gamma_{j}^{\lambda_{j}}:=\lambda_{j} 
\gamma_{j} \lambda_{j}^{-1}$).
 Thus, we define the representation  
\begin{eqnarray*}
\rho: \langle
 a,\mathbf{u_{1}},...,\mathbf{u_{s}}|a^{2} \Pi_{i=1}^{s}\mathbf{u_{i}}=1] \rangle & \lra & \Sigma_{d} \\
 a & \longmapsto & \alpha^{-1},  \\
\mathbf{u_{i}}& \longmapsto & \gamma_{i}^{\lambda_{i}}, \\
\mathbf{u_{s}}& \longmapsto & \gamma_{s}.  
\end{eqnarray*}
Then, there exists a primitive branched covering 
 $(M,\phi, \mathbb{R}P^{2},B_{\phi},d)$ with $M$ nonorientable realizing
 $\mathscr{D}$ as branch data. Moreover, since  $\langle\gamma_{12 \dots s-1},
 \gamma_{ts} \rangle < 
 G:=\Im \rho$, then $G$ is a primitive permutation group and by Proposition \ref{yo},
  $(M,\phi,\mathbb{R}P^{2},B_{\phi},d)$ is indecomposable.
\end{proof}

\begin{Prop}\label{d>4t>2}
Let $d>4$ be even and $\mathscr{D}=\{D_{1},\dots,D_{s}\}$ an admissible datum such that $D_{i}=[2,\dots,2]$ for $i=1,\dots,s$. If $s>2$, there is an indecomposable branched 
covering  $(M,\phi,\mathbb{R}P^{2},B_{\phi},d)$ realizing $\mathscr{D}$.
\end{Prop}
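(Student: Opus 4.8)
The plan is to reduce to the preceding Proposition (the one treating admissible data that contain a part different from $[2,\dots,2]$) by \emph{merging} two of the $s$ involutions into a single factor of a different cyclic type, applying that Proposition, and then \emph{splitting} the factor back after a primitive monodromy group has been produced. First I would form the modified datum $\mathscr{D}'=\{[d/2,d/2],D_{3},\dots,D_{s}\}$, obtained from $\mathscr{D}$ by replacing the two parts $D_{1}=D_{2}=[2,\dots,2]$ by the single part $[d/2,d/2]$. Since $d>4$ forces $d/2>2$, the part $[d/2,d/2]$ is genuinely different from $[2,\dots,2]$; this is exactly where the hypothesis $d>4$ is used, and for $d=4$ one has $[d/2,d/2]=[2,2]$, so the reduction collapses, consistently with the statement of Theorem \ref{d par}.

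Next I would verify that $\mathscr{D}'$ is admissible. It has $s-1\geq 2$ parts because $s>2$, and a direct computation gives $\nu(\mathscr{D}')=(d-2)+(s-2)(d/2)$, which exceeds $d-1$ and satisfies $\nu(\mathscr{D}')\equiv \nu(\mathscr{D})\equiv 0\pmod 2$ because $\mathscr{D}$ is admissible and $d$ is even. Hence the preceding Proposition applies to $\mathscr{D}'$ and produces an indecomposable primitive realization, that is, a representation sending the loop around the $[d/2,d/2]$-branch point to some $\eta\in[d/2,d/2]$ and the remaining loops to $\gamma_{i}\in D_{i}$ $(i\geq 3)$, whose ordered product is a square and whose image $G'$ is a transitive, primitive permutation group.

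The key auxiliary step is a splitting lemma: every permutation $\eta$ of cyclic type $[d/2,d/2]$ is a product $\eta=\alpha_{1}\alpha_{2}$ of two fixed-point-free involutions $\alpha_{1},\alpha_{2}\in[2,\dots,2]$. I would prove this by observing that $\eta$ is conjugate to the square of a $d$-cycle, and that the square of a $d$-cycle is the product of two fixed-point-free reflections of the $d$-gon (those through opposite edge-midpoints); transporting by the conjugacy yields $\alpha_{1},\alpha_{2}$. Equivalently, one chooses a fixed-point-free involution $\alpha_{1}$ that interchanges the two $(d/2)$-cycles of $\eta$ and satisfies $\alpha_{1}\eta\alpha_{1}^{-1}=\eta^{-1}$, and sets $\alpha_{2}:=\alpha_{1}\eta$; the interchange condition forces $\alpha_{2}$ to be fixed-point-free and the conjugation condition forces $\alpha_{2}^{2}=1$.

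Finally I would assemble a representation for $\mathscr{D}$ itself by setting $\mathbf{u}_{1}\mapsto\alpha_{1}$, $\mathbf{u}_{2}\mapsto\alpha_{2}$, $\mathbf{u}_{i}\mapsto\gamma_{i}$ for $i\geq 3$, and keeping the same square root $a$ used for $\mathscr{D}'$. Because $\alpha_{1}\alpha_{2}=\eta$ occupies the same two slots, the ordered product is unchanged; it is therefore still a square, so by the Remark this representation legitimately realizes $\mathscr{D}$, giving a primitive branched covering $(M,\phi,\mathbb{R}P^{2},B_{\phi},d)$ with $M$ nonorientable. Moreover $\Im\rho=\langle\alpha_{1},\alpha_{2},\gamma_{3},\dots\rangle\supseteq\langle\eta,\gamma_{3},\dots\rangle=G'$, and a transitive group containing a transitive primitive subgroup is itself primitive, since a nontrivial block of the larger group would be a nontrivial block of the subgroup. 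Hence $\Im\rho$ is primitive and, by Proposition \ref{yo}, the covering is indecomposable. I expect the only delicate points to be the admissibility bookkeeping for $\mathscr{D}'$ and the splitting lemma; the preservation of primitivity under enlarging the generating set is routine.
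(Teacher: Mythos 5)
Your proof is correct, but it takes a genuinely different route from the paper's. The paper argues directly: it applies Lemma 4.5 of \cite{EKS} to $D_{1},D_{2}$ to produce $\gamma_{1},\gamma_{2}\in[2,\dots,2]$ with $\langle\gamma_{1},\gamma_{2}\rangle$ transitive and $\gamma_{1}\gamma_{2}\in[d/2,d/2]$ (this is, in effect, your splitting lemma, obtained for free from \cite{EKS}), and then keeps combining with $D_{3}$ (and $D_{4}$) via Lemmas 4.3 and 4.5 of \cite{EKS} until the total product is a $(d-1)$-cycle, which is simultaneously a square (Example \ref{square}) and forces primitivity of the monodromy group (Example \ref{e1}); the Hurwitz representation is then written out explicitly, with a case analysis on the parities of $d/2$ and of $s$ (when $d/2$ is odd one must pass through a $d$-cycle first, and the middle loops are all sent to a single conjugate of $\gamma_{2}$, using that an odd power of an involution is that involution). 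You instead black-box all of that: you merge $D_{1},D_{2}$ into $[d/2,d/2]$, check admissibility of $\mathscr{D}'$ (your bookkeeping is right: $\nu(\mathscr{D}')=\nu(\mathscr{D})-2$, so parity and the bound $\nu\geq d-1$ survive), invoke the preceding Proposition, and split $\eta$ back into two fixed-point-free involutions. What your route buys is the elimination of the parity case analysis, which is entirely absorbed into the preceding Proposition; what it costs is the splitting lemma, which you prove by hand (correctly --- both the dihedral argument and the argument via $\alpha_{2}:=\alpha_{1}\eta$ with $\alpha_{1}\eta\alpha_{1}^{-1}=\eta^{-1}$ work), although you could simply have quoted Lemma 4.5 of \cite{EKS} for it, exactly as the paper's first step does. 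Your observation that a transitive overgroup of a primitive group is primitive is correct and is the same mechanism the paper uses when it passes from $\langle\gamma_{12\dots},\gamma_{s}\rangle$ to $\Im\rho$.

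Two small points of care, neither of which is a real gap. First, $G'=\Im\rho'$ is generated by $\rho'(a)$ together with the branch permutations, so your displayed equality $\langle\eta,\gamma_{3},\dots\rangle=G'$ is not quite right; since you keep $\rho(a)=\rho'(a)$, the containment $G'\subseteq\Im\rho$ that you actually need does hold, but the equality should be amended. Second, Proposition \ref{yo} is stated for primitive (i.e.\ $\pi_{1}$-surjective) coverings, and the Remark you cite only guarantees that the representation yields a branched covering, not a primitive one; this is repaired by noting that a non-primitive covering of degree $d>2$ over $\mathbb{R}P^{2}$ factors through $S^{2}\to\mathbb{R}P^{2}$, hence is decomposable, hence has imprimitive monodromy group, so a primitive monodromy group forces both $\pi_{1}$-surjectivity and indecomposability. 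The paper's own proofs are equally brisk on this point.
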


\begin{proof}
Let $s>2$ be a natural number.  Since $\nu(D_{1})+\nu(D_{2})=d$,  by Lemma 4.5 \cite{EKS}
there are permutations $\gamma_{1} \in D_{1}$, $\gamma_{2} \in D_{2}$ such that
$\langle 
\gamma_{1},\gamma_{2} \rangle$ is transitive and $\gamma_{1}\gamma_{2} \in
D_{12}:=[d/2,d/2]$. Then
 $\nu(D_{12})+\nu(D_{3})=d-2+d/2$.
If $d/2$ is even, we apply again Lemma 4.5  \cite{EKS}        
 and we obtain permutations
$\gamma_{12} \in D_{12}$, $\gamma_{3} \in D_{3}$ such that $\langle
\gamma_{12},\gamma_{3}\rangle $ is transitive and $\gamma_{12}\gamma_{3} \in
D_{123}:=[d-1,1]$ is a
$(d-1)$-cycle (because $d/2 \neq 2$). Then by Example \ref{square}, there exist $\alpha \in
\Sigma_{d}$  such that
$\gamma_{12}\gamma_{3}=\alpha^{2}$ and $\langle \gamma_{12},\gamma_{3} \rangle$
is primitive. Since $\gamma_{12}$ and $\gamma_{1}\gamma_{2}$ are conjugates,
there is $\lambda \in \Sigma_{d}$ such that $\gamma_{12}=\lambda
\gamma_{1}\gamma_{2} \lambda^{-1}$. If $s$ is odd, we define the following
representation: 
\begin{eqnarray*}
\rho:  \langle a,\mathbf{\{u_{j}\}}_{j=1}^{s}| a^{2}\prod_{j=1}^{s}\mathbf{u_{j}}=1\rangle& \lra & \Sigma_{d} \\
 a & \longmapsto & \alpha^{-1},  \\
\mathbf{u_{1}}& \longmapsto & \lambda \gamma_{1} \lambda^{-1},\\
\mathbf{\{u_{i}\}}_{i=2}^{s-1}& \longmapsto & \lambda \gamma_{2} \lambda^{-1},\\
\mathbf{u_{s}}& \longmapsto & \gamma_{3}.  
\end{eqnarray*}
 If $s$ is even,  then
$\nu(D_{123})+\nu(D_{4})=d-2+d/2$ and again, applying  Lemma 4.5 \cite{EKS}
we obtain permutations $\gamma_{123} \in D_{123}$ and $\gamma_{4} \in D_{4}$ such
that 
$\langle \gamma_{123},\gamma_{4} \rangle$ is transitive and
$\gamma_{123}\gamma_{4}$ is a $(d-1)$-cycle.  Then by Example \ref{square}, there is $\alpha \in \Sigma
_{d}$ such that $\gamma_{123}\gamma_{4}=\alpha^{2}$ and $\langle \gamma_{123},
\gamma_{4} \rangle$ is primitive by Example \ref{e1}. Notice that there exist
$\lambda_{1},\lambda_{2},\lambda_{3} \in \Sigma_{d}$ such that
$\gamma_{123}=\gamma_{1}^{\lambda_{1}}\gamma_{2}^{\lambda_{2}}\gamma_{3}^{\lambda_{3}}$
 and in this case we define the following representation:
\begin{eqnarray*}
\rho:  \langle a,\mathbf{\{u_{j}\}}_{j=1}^{s}|
 a^{2}\prod_{j=1}^{s}\mathbf{u_{j}}=1\rangle& \lra & \Sigma_{d} \\ 
 a & \longmapsto & \alpha^{-1}, \\
\mathbf{u_{1}}& \longmapsto & \gamma_{1}^{\lambda_{1}}, \\
\mathbf{\{u_{i}\}}_{i=2}^{s-2}& \longmapsto & \gamma_{2}^{\lambda_{2}},\\
\mathbf{u_{s-1}}& \longmapsto & \gamma_{3}^{\lambda_{3}},\\
\mathbf{u_{s}}& \longmapsto & \gamma_{4}. 
\end{eqnarray*}
Whichever the case $G:=Im (\rho)$ is primitive by
Proposition \ref{yo}, the branched covering associated to $G$ is
indecomposable. 

If $d/2$ is odd, since $\nu(\mathscr{D})=sd/2$, the hypothesis implies  $s\geq 4$ even.  Since
$\nu(D_{12})+ \nu(D_{3})=(d-1)+(d/2-1)$,  by Lemma 4.3 \cite{EKS}
there are $\gamma_{12} \in D_{12}$, $\gamma_{3} \in D_{3}$ such that $\langle
\gamma_{12},\gamma_{3} \rangle$ is transitive and $\gamma_{12}\gamma_{3}$ is a 
$d$-cycle. Let $D_{123}:=[d]$ thus $\nu(D_{123})+\nu(D_{4})=d+(d/2-1)$ and by Lemma 4.5 in \cite{EKS}, we obtain permutations $\gamma_{123} \in
D_{123}$, $\gamma_{4} \in D_{4}$ such that $\langle \gamma_{123}, \gamma_{4}
\rangle$ is transitive and $\gamma_{123}\gamma_{4}$ is a $(d-1)$-cycle. Then by Example \ref{square},
there is $\alpha \in \Sigma_{d}$ such that $\gamma_{123} \gamma_{4}=
\alpha^{2}$ and, by Example \ref{e1}, 
$\langle \gamma_{123}, \gamma_{4} \rangle$ is primitive. For this case we
define a representation like the last in the case before.
\end{proof}

\begin{Lemma}\label{unico}
Let $d\neq 2$ be even and $\alpha, \beta \in [2,\dots,2]$, such that $G:=\langle \alpha,
\beta \rangle$ is transitive. Then $G$ is imprimitive and unique up to
conjugation. 
\end{Lemma}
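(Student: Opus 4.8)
The plan is to identify $G$ completely: I claim it is a regular dihedral group of order $d$. First, since $\alpha$ and $\beta$ are involutions, $G=\langle\alpha,\beta\rangle$ is dihedral; set $\sigma:=\alpha\beta$ and $C:=\langle\sigma\rangle$, so that $C\trianglelefteq G$ with $[G:C]\le 2$ and $|G|=2\,\mathrm{ord}(\sigma)$ (note $\alpha\ne\beta$, as otherwise $G$ would have order $2$ and could not act transitively on $d\ge 4$ points). Because $C$ is normal, $G$ permutes the set of $C$-orbits, and transitivity of $G$ makes this permutation action transitive; hence the number of $C$-orbits divides $|G/C|\le 2$, i.e. there are one or two $C$-orbits.

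The key step is to rule out a single $C$-orbit, which I would do by a parity argument. If $C$ had one orbit, then $\sigma$ would be a $d$-cycle, hence an odd permutation since $d$ is even. But each of $\alpha,\beta$ is a product of $d/2$ transpositions, so $\mathrm{sgn}(\alpha)=\mathrm{sgn}(\beta)=(-1)^{d/2}$ and therefore $\mathrm{sgn}(\sigma)=(-1)^{d}=1$ is even, a contradiction. Thus there are exactly two $C$-orbits; being interchanged by the nontrivial coset of $C$ in $G$, they have equal size $d/2$. Consequently $\sigma\in[d/2,d/2]$, $\mathrm{ord}(\sigma)=d/2$, and $|G|=d$. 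Since $G$ is transitive on $d$ points, this forces $G$ to act regularly (all point stabilizers are trivial), and $G\cong D_{d/2}$, the dihedral group of order $d$.

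Imprimitivity is then immediate: the two $C$-orbits form a $G$-invariant partition into blocks of size $d/2$, which is nontrivial because $1<d/2<d$ for $d\ge 4$; alternatively, $G_{x}=1$ is not a maximal subgroup of $G$ as $|G|=d$ is not prime, so Proposition \ref{dixon} applies. For uniqueness up to conjugation I would invoke the standard fact that two regular subgroups of $\Sigma_{d}$ are conjugate precisely when they are abstractly isomorphic: an isomorphism of the two groups, transported through their free transitive actions after choosing base points, is realized by a bijection of $\{1,\dots,d\}$ that conjugates one group onto the other. Since every admissible choice of $\alpha,\beta$ yields $G\cong D_{d/2}$, all such $G$ are conjugate in $\Sigma_{d}$. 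The main obstacle is the second paragraph, namely forcing the action to be regular, and within it the exclusion of the $d$-cycle case, where the sign computation is the essential and only genuinely arithmetic input.
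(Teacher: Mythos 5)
Your proof is correct, but it takes a genuinely different route from the paper's. The paper argues combinatorially: regarding the $d/2$ transpositions of $\alpha$ as vertices and the transpositions of $\beta$ as links between them, transitivity forces the links to string all vertices into a single cycle, so that up to simultaneous conjugation $\alpha=(1\,2)(3\,4)\cdots(d{-}1\;d)$ and $\beta=(2\,3)(4\,5)\cdots(d\;1)$; both uniqueness and imprimitivity are then read off from this canonical form, the latter via the explicit block $\{1,3,\dots,d-1\}$. You instead work structurally: $G$ is dihedral with cyclic normal subgroup $C=\langle\alpha\beta\rangle$, your sign computation (the one genuinely arithmetic step, which plays exactly the role of the paper's linking argument) excludes a single $C$-orbit, so there are two orbits of size $d/2$, giving $\alpha\beta\in[d/2,d/2]$, $|G|=d$, regularity, imprimitivity via the $C$-orbits as blocks (or via Proposition \ref{dixon}), and uniqueness from the standard fact that abstractly isomorphic regular subgroups of $\Sigma_{d}$ are conjugate. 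Your route is more conceptual and identifies $G$ abstractly as a dihedral group of order $d$ acting regularly; the paper's route yields something slightly stronger, namely that the generating pair $(\alpha,\beta)$, not merely the subgroup $G$, is canonical up to conjugation, together with an explicit block, and these extras are quietly reused later in the proof of Proposition \ref{d>4t=2}, where the cycle shape of $\gamma_{1}\gamma_{2}$ on each orbit is needed --- though your conclusion $\alpha\beta\in[d/2,d/2]$ recovers that shape as well. One small patch: the equality $|G|=2\,\mathrm{ord}(\alpha\beta)$ requires $\alpha\notin C$, which your parenthetical remark ($\alpha\neq\beta$) does not by itself justify; either note that $\alpha\in C$ would make $G$ abelian, whence $\sigma:=\alpha\beta$ satisfies $\sigma=\sigma^{-1}$ and then $\beta=\alpha\sigma=1$, a contradiction, or simply observe that the trivial bound $|G|\leq 2\,\mathrm{ord}(\sigma)$ (from $G=C\cup\alpha C$) combined with $|G|\geq d$ from transitivity already forces $|G|=d$ once $\mathrm{ord}(\sigma)=d/2$ is known.
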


\begin{proof}
Let us suppose $d=2k$, with $1 < k \in \mathbb{N}$. To obtain 
$\langle \alpha,\beta \rangle$ transitive, it is 
necessary that transpositions in $\beta$ ``link" $k$ transpositions
 in $\alpha$. For that, we need $k-1$ transpositions and thus $\beta$ is automatically
 defined.
Then, up to conjugation, we can consider
$\alpha =(1\;2)(3\;4)\dots(d-1 \; d)$ and $\beta=(2 \;3)(4 \; 5)\dots (d-2 \;
d-1)(d \; 1)$. Thus $\alpha \beta=(1\;3\dots d-1)(2\;4\dots d)$ and  the set
 $B=\{1,3, \dots, d-1\}$ will be a nontrivial block of $G=\langle \alpha,
 \beta \rangle$, which makes it imprimitive. 
\end{proof}

\begin{Prop}\label{d>4t=2}
A primitive branched covering realizing
$\{[2,\dots,2],[2,\dots,2]\}$ is decomposable. 
\end{Prop}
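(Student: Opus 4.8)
The plan is to invoke Proposition~\ref{yo}: since the covering is primitive, it is decomposable exactly when its monodromy group $G$ is imprimitive, so everything reduces to proving that $G$ is imprimitive. By the presentation $\pi_1(\mathbb{R}P^{2}-\{x_1,x_2\})=\langle a,\mathbf{u}_1,\mathbf{u}_2\mid a^{2}\mathbf{u}_1\mathbf{u}_2=1\rangle$, the monodromy group is $G=\langle \tau,\alpha_1,\alpha_2\rangle$, where $\alpha_i:=\rho(\mathbf{u}_i)\in[2,\dots,2]$ are fixed-point-free involutions, $\tau:=\rho(a)$, and $\alpha_1\alpha_2=\tau^{-2}$. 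Since $M$ is connected, $G$ is transitive on $\Omega=\{1,\dots,d\}$, with $d=2k\geq 4$.

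The heart of the argument is to produce a normal subgroup of $G$ with small orbits. I would set $\gamma:=\alpha_1\alpha_2=\tau^{-2}$ and consider $N:=\langle\gamma\rangle=\langle\tau^{2}\rangle$. First I would check that $N\trianglelefteq G$: the element $\tau$ centralizes $\tau^{2}=\gamma$, and each involution inverts $\gamma$, since $\alpha_1\gamma\alpha_1^{-1}=\alpha_2\alpha_1=\gamma^{-1}$ and likewise for $\alpha_2$; hence every generator of $G$ normalizes $N$. Because $G$ is transitive and $N$ is normal, the $N$-orbits form a block system, all blocks of equal size. Next I would observe that $N$ is never transitive: viewing $\alpha_1,\alpha_2$ as two perfect matchings, their union is a disjoint union of even cycles, so a proper $2$-colouring of $\Omega$ is interchanged by each $\alpha_i$; consequently $\gamma=\alpha_1\alpha_2$ preserves each colour class, and $N$ has at least two orbits. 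Thus the $N$-orbits are proper blocks.

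The only remaining point, and the main obstacle, is the nontriviality of these blocks. The $N$-orbits have size larger than $1$ precisely when $\gamma\neq 1$, i.e. when $\alpha_1\neq\alpha_2$; in that case $G$ is immediately imprimitive. When $\alpha_1=\alpha_2$ the subgroup $N$ collapses ($\gamma=1$) and one must argue differently. Here $\tau^{2}=1$, so $G=\langle\tau,\alpha_1\rangle$ is generated by two involutions and is therefore dihedral, acting transitively on $d=2k\geq 4$ points. Its cyclic rotation subgroup $R=\langle\tau\alpha_1\rangle$ is normal of index at most $2$; if $R$ is intransitive its equal-sized proper orbits are nontrivial blocks, while if $R$ is transitive it is regular of even order $2k\geq 4$, so its order-$2$ subgroup, which is characteristic in $R$ and hence normal in $G$, has orbits of size $2$, again nontrivial blocks. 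Either way $G$ is imprimitive.

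Combining both cases, the monodromy group of any primitive branched covering realizing $\{[2,\dots,2],[2,\dots,2]\}$ is imprimitive, so Proposition~\ref{yo} yields that the covering is decomposable. I expect the degenerate case $\alpha_1=\alpha_2$ to be the genuinely separate piece of work, since there the canonical normal subgroup $\langle\tau^{2}\rangle$ degenerates and the imprimitivity must instead be read off from the rotation subgroup of the dihedral group $G$.
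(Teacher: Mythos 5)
Your proof is correct, but it takes a genuinely different, more structural route than the paper's. The paper splits on whether $\langle\gamma_{1},\gamma_{2}\rangle$ is transitive: in the transitive case it invokes Lemma \ref{unico} (the explicit, up-to-conjugation classification of transitive groups generated by two fixed-point-free involutions); in the intransitive case it studies $Fix(\gamma_{1}\gamma_{2})$, reduces the nonempty case to $\gamma_{1}\gamma_{2}=1$ with an enumeration of the possible $\alpha$'s, and in the fixed-point-free case matches the $\langle\gamma_{1},\gamma_{2}\rangle$-orbits by hand to show they all have equal size and that the blocks of the restrictions are blocks of $G$. You replace essentially all of this with one observation: $\langle\alpha_{1}\alpha_{2}\rangle=\langle\tau^{2}\rangle$ is normal in $G$ (each involution inverts it, $\tau$ centralizes it), so its orbits form a block system; intransitivity of this subgroup follows from the $2$-colouring of the union of the two matchings (equivalently, from a parity count: a product of two fixed-point-free involutions is an even permutation, hence not a $d$-cycle for $d$ even), and the blocks are nontrivial whenever $\alpha_{1}\alpha_{2}\neq 1$. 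This is shorter, avoids Lemma \ref{unico} entirely, and in fact makes explicit a step the paper glosses over: imprimitivity of the subgroup $\langle\gamma_{1},\gamma_{2}\rangle$ does not by itself give imprimitivity of $G$, but the block produced in Lemma \ref{unico} is an orbit of the normal subgroup $\langle\gamma_{1}\gamma_{2}\rangle$, which is exactly your argument. Your residual case $\alpha_{1}=\alpha_{2}$ coincides with the paper's case $Fix(\gamma_{1}\gamma_{2})\neq\emptyset$, handled there by listing the two admissible $\alpha$'s and here by the dihedral structure; the two are comparable in length. One pinhole in your version: when $R=\langle\tau\alpha_{1}\rangle$ is intransitive you call its orbits ``nontrivial blocks,'' which fails if $R=1$; but $R=1$ forces $\tau=\alpha_{1}$, hence $G=\langle\alpha_{1}\rangle$ of order $2$, which cannot act transitively on $d\geq 4$ points, so one sentence fills the gap (and the same remark disposes of the degenerate possibility $\tau=1$).
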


\begin{proof}
Let $(M,\phi,\mathbb{R}P^{2},\{x,y\},d)$ be a primitive branched covering with
branch data $\{[2,\dots,2],[2,\dots,2]\}$. If 
\begin{eqnarray*}
\rho: \langle a, \mathbf{u_{1}}, \mathbf{u_{2}} | a^{2} \mathbf{u_{1}u_{2}} =1 \rangle & \lra &
\Sigma_{d}\\
a & \longmapsto & \alpha, \\
\mathbf{u_{1}} & \longmapsto & \gamma_{1},\\
\mathbf{u_{2}}& \longmapsto & \gamma_{2},
\end{eqnarray*}
is its Hurwitz's representation, 
$G:=\Im \rho=\langle \alpha, \gamma_{1}, \gamma_{2} \vert \gamma_{1}^{2}=
\gamma_{2}^{2}=1, \alpha^{2}\gamma_{1}\gamma_{2}=1 \rangle$ is a transitive permutation
 group with  $\gamma_{1}, \gamma_{2} \in 
[2,\dots,2]$.

If $\langle \gamma_{1},\gamma_{2} \rangle$ is transitive, by Lemma \ref{unico}
it is  imprimitive and the branched covering is decomposable. If not, using
relations in $G$, is easy to see that  
$[Fix (\gamma_{1}\gamma_{2})]^{\gamma_{i}} \subset Fix (\gamma_{1}\gamma_{2})$, for $i=1,2$, 
and $[Fix (\gamma_{1}\gamma_{2})]^{\alpha} \subset Fix (\gamma_{1}\gamma_{2})$. Then
\begin{eqnarray*}
Fix(\gamma_{1}\gamma_{2})=[Fix(\gamma_{1}\gamma_{2})]^{\gamma_{i}}=
[Fix(\gamma_{1}\gamma_{2})]^{\alpha},
\end{eqnarray*}
because $\gamma_{i}$, for $i=1,2$, and $\alpha$ are permutations.
  Then
for all $g \in G$  we have 
$[Fix(\gamma_{1}\gamma_{2})]^{g}=Fix(\gamma_{1}\gamma_{2})$. If 
$Fix (\gamma_{1}\gamma_{2})\neq \emptyset$, since $G$ is transitive, 
then 
$\gamma_{1}\gamma_{2}=1$. Up to conjugation,
$\gamma_{1}=\gamma_{2}=(1 \; 2)(3 \; 
4)\dots(d-3 \; d-2)(d-1 \; d)$. By the relation, the options for  
$\alpha$ are either $\alpha:=(2 \; 3)(4 \; 5)\dots(d-2 \;
d-1)(d \quad 1)$  or $\alpha:=(1)(2 \; 3)(4 
\; 5)\dots(d-2 \; d-1)(d)$. The first option implies $G$ equal
 to the group in Lemma \ref{unico}, therefore it is
imprimitive. The second one implies  $\{1,d\}$ as a block. If 
$Fix (\gamma_{1}\gamma_{2})=\emptyset$ then
every cycle of $\alpha$ has
length $\geq3$ and  $\gamma_{1}$,  
 $\gamma_{2}$ have not common cycles. Let
$O_{1}, \dots, O_{k}$ be the orbits of the action of $\langle
\gamma_{1},\gamma_{2} 
\rangle$  
on $\{1, \dots, d\}$, with  $k>1$. Notice that $\#O_{i} \geq 4$ is even,
because each transposition of $\gamma_{2}$ linked transpositions of 
$\gamma_{1}$ then, it connects an even number of elements. On the other hand, if
 $\langle \gamma_{1},\gamma_{2}\rangle_{i}$ denotes the restriction of 
  $\langle \gamma_{1},\gamma_{2}\rangle$ on  $O_{i}$, we are in the
 situation of  Lemma \ref{unico}, therefore $\langle
 \gamma_{1},\gamma_{2}\rangle_{i}$ is imprimitive. If $\#O_{i}=2n$ for 
$n \in \mathbb{Z}^{+}$, then its elements appear in $\gamma_{1}\gamma_{2}$ in the form
$(a_{i_{1}} \dots a_{i_{n}})(a_{i_{n+1}} \dots a_{i_{2n}})$. 
Considering  the relation $\gamma_{1}\gamma_{2}=\alpha^{-2}$,
 we conclude that $\alpha$ will
connect two orbits, $O_{i}$ and $O_{j}$, only if $\#O_{i}=\#O_{j}$. But $\alpha$ 
makes the group $G$  transitive, and so  all orbits have the same cardinality equal 
to $2n$. For example, if $i \neq j$ and the elements of $O_{j}$ are in 
 $\gamma_{1}\gamma_{2}$
in the form $(b_{j_{1}} \dots
b_{j_{n}})(b_{j_{n+1}} \dots b_{j_{2n}})$, whitout loss of
generality 
$(a_{i_{1}}\;b_{j_{1}}\;a_{i_{2}}\;b_{j_{2}}\dots 
a_{i_{n}}\;b_{j_{n}})$ is a cycle of $\alpha$ and
thus, the blocks of $\langle \gamma_{1},\gamma_{2} \rangle_{i}$, for $i=1,\dots,k$,
 become blocks for $G$. 
Then $G$ is imprimitive and the branched covering is decomposable. 
\end{proof}

\begin{Prop}
A primitive branched covering of degree $4$ realizing the finite
collection  $\mathscr{D}=\{[2,2],\dots, 
[2,2]\}$ is decomposable.
\end{Prop}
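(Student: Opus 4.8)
The plan is to show that the monodromy group $G$ of any such covering is imprimitive and then invoke Proposition \ref{yo}. The decisive structural observation is that in $\Sigma_{4}$ the cycle type $[2,2]$ consists precisely of the three non-identity elements of the Klein four-group $V=\{1_{4},(1\,2)(3\,4),(1\,3)(2\,4),(1\,4)(2\,3)\}$, which is normal in $\Sigma_{4}$. Writing the Hurwitz representation as $\rho$ with $a\mapsto\alpha$ and $\mathbf{u_{i}}\mapsto\gamma_{i}$, the relation $a^{2}\prod_{i=1}^{s}\mathbf{u_{i}}=1$ forces $\alpha^{2}=(\gamma_{1}\cdots\gamma_{s})^{-1}$; since every $\gamma_{i}\in V$ and $V$ is abelian, this gives $\alpha^{2}\in V$. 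Note that the covering being primitive with $M$ connected makes $G=\langle\alpha,\gamma_{1},\dots,\gamma_{s}\rangle$ transitive, so imprimitivity is the statement that $G$ preserves a non-trivial block.

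First I would list the three partitions of $\{1,2,3,4\}$ into two pairs and observe that every element of $V$ preserves each of them, since an element of $V$ either fixes both pairs or interchanges them. Consequently each $\gamma_{i}$ preserves all three block systems, and hence $G$ preserves a given one of them as soon as $\alpha$ does. Thus the whole problem reduces to understanding $\alpha$.

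Next I would determine which $\alpha$ can occur. Because $\alpha^{2}\in V$ has order $1$ or $2$, the order of $\alpha$ divides $4$; in particular $\alpha$ is not a $3$-cycle, since the square of a $3$-cycle is again a $3$-cycle and does not lie in $V$. A short case check then shows that every element of $\Sigma_{4}$ other than a $3$-cycle stabilizes at least one of the three pair-partitions: the identity and the double transpositions stabilize all of them, a transposition $(i\,j)$ stabilizes the system having $\{i,j\}$ as a block, and a $4$-cycle stabilizes the system recording its square in $V$. Hence $\alpha$, and therefore $G$, preserves a non-trivial block, so $G$ is imprimitive and Proposition \ref{yo} yields decomposability. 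Equivalently, one may pass to the quotient $\Sigma_{4}/V\cong\Sigma_{3}$: the image of $G$ is generated by the class of $\alpha$, which squares to the identity and so has order at most $2$; since the only primitive transitive subgroups of $\Sigma_{4}$ are $A_{4}$ and $\Sigma_{4}$, whose images in $\Sigma_{3}$ have orders $3$ and $6$, the group $G$ cannot be primitive.

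The argument is complete once these observations are in place, and the only point requiring care is that the conclusion must hold uniformly for every admissible $s\ge 2$ and for every choice of the $\gamma_{i}$ — which is exactly what confining all generators except $\alpha$ to the single normal subgroup $V$ guarantees. The main (and rather mild) obstacle is therefore not a computation but the recognition that the $[2,2]$-constraint pins all but one generator inside $V$; after that, the square condition $\alpha^{2}\in V$ excludes the only primitivity-producing possibility, namely $\alpha$ being a $3$-cycle.
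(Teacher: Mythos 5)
Your proof is correct, and it takes a genuinely different route from the paper's. The paper argues by explicit enumeration: up to conjugation the image of $\prod_{i}\mathbf{u_{i}}$ is either $1_{4}$ or $(1\,2)(3\,4)$; it then splits on whether $U=\langle \gamma_{1},\dots,\gamma_{t}\rangle$ is transitive (in which case $U$ is the Klein group, with every pair a block), writes down in each branch the permutations $\alpha$ compatible with $\alpha^{2}=(\prod_{i}\gamma_{i})^{-1}$ and with transitivity of $G$, and exhibits an explicit block such as $\{1,2\}$. You instead make a single structural observation: all the $\gamma_{i}$ lie in the Klein four-group $V$, which is normal in $\Sigma_{4}$ and is exactly the kernel of the action of $\Sigma_{4}$ on the three pair-partitions; hence every $\gamma_{i}$ preserves all three block systems, $\alpha^{2}\in V$, and imprimitivity of $G$ reduces to whether $\alpha$ preserves some pair-partition---which it must, since $\alpha^{2}\in V$ rules out exactly the $3$-cycles, and every non-$3$-cycle preserves at least one pair-partition. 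This buys uniformity (no case split over the number of partitions, over the individual $\gamma_{i}$, or over normal forms for $\alpha$; the quotient picture $\Sigma_{4}/V\cong\Sigma_{3}$ makes the mechanism transparent) and also completeness: in the paper's last case (``$U$ not transitive forces $\alpha=(1\,3\,2\,4)$'') the claim tacitly assumes $\prod_{i}\gamma_{i}=(1\,2)(3\,4)$, whereas when $\prod_{i}\gamma_{i}=1_{4}$ the relation only forces $\alpha^{2}=1_{4}$, so transitive choices such as $\alpha=(1\,3)$ or $\alpha=(1\,3)(2\,4)$ are not covered by that sentence; they still give imprimitive groups, and your uniform argument handles them automatically. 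The one external fact your alternative quotient formulation needs---that $A_{4}$ and $\Sigma_{4}$ are the only primitive transitive subgroups of $\Sigma_{4}$---is standard, and your first, block-based argument avoids even that.
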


\begin{proof}
Let $(M,\phi,\mathbb{R}P^{2},B_{\phi},4)$ be a primitive branched covering with branch
data  $\mathscr{D}$. 
Suppose
$\nu(\mathscr{D})=2t$, $2 \leq t \in \mathbb{Z}^{+}$. Let
 \begin{eqnarray*}
\rho: \langle a, \mathbf{u_{1}}, \dots, \mathbf{u_{t}} | a^{2} \prod_{i=1}^{t}\mathbf{ u_{i}} =1 \rangle & \lra &
\Sigma_{4}\\
a & \longmapsto & \alpha \\
\mathbf{u_{i}} & \longmapsto & \gamma_{i}.
\end{eqnarray*}
be its Hurwitz's representation. Note that 
the possible images for
$\prod_{i=1}^{t} \mathbf{u_{i}}$ are, without loss of generality, either $(1)(2)(3)(4)$ or
$(1 2)(3 4)$.
Define $U:=\langle \gamma_{1}, \dots, \gamma_{t} \rangle$.
If $U$ is transitive,
then $U \cong \langle (12)(34),(13)(24) \rangle$ is imprimitive, because each
pair of elements is a block. Thus, if $\rho(\prod_{i=1}^{t}\mathbf{u_{i}})=1$, 
the group
$G:=\Im \rho=\langle U,\alpha \vert \alpha^{2}\prod_{i=1}^{t}=1 \gamma_{i} \rangle$
 is imprimitive, for all $\alpha$. On the other
hand, if
 $\rho(\prod_{i=1}^{t}u_{i})=(12)(34)$ then, either
$\alpha=(1324)$ or $\alpha=(1423)$. Whichever the case, $\{1,2\}$
is a block.
If $U$ is not transitive, then  $U \cong \langle (12)(34) \rangle$ and, for guarantee
 the transitivity of  $G$, we have
$\alpha=(1324)$. Thus $\{1,2\}$ is a block and $G$ is imprimitive.
\end{proof}

We summarize the case $d$ even in the following theorem:

\begin{theo}\label{d par}
Let $d$ be even and $\mathscr{D}=\{D_{1},\dots,D_{s}\}$ an admmisible datum. Then, $\mathscr{D}$ is realizable by an indecomposable
   branched covering if, and only if,  either:\\
(1) $d=2$, or\\
 (2) There is  $i \in \{1,\dots,s\}$ such that
   $D_{i} \neq [2,\dots,2]$, or\\
(3) $d>4$ and $s>2$.\qed
\end{theo}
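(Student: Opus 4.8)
The plan is to read Theorem \ref{d par} as a summary of the propositions already established in this section, so that its proof amounts to matching each of the three conditions with the appropriate earlier realization result and matching their negation with the appropriate decomposability result. Before splitting into cases I would record one structural remark used throughout: since a non-primitive branched covering always admits an obvious decomposition, any \emph{indecomposable} realization of $\mathscr{D}$ is necessarily primitive; hence, by Proposition \ref{yo}, asking for an indecomposable realization is exactly asking for a primitive realization whose monodromy group is primitive. This reduces both directions to statements about primitive coverings, which is the setting of all the preceding propositions.

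For the sufficiency direction I would dispatch the three conditions in turn. If (1) holds, i.e. $d=2$, the degree is prime and cannot be written as a product of two integers both larger than $1$; since $\mathscr{D}$ is admissible it is realizable, and every such realization is automatically indecomposable. If (2) holds, I first observe that for $d=2$ every non-trivial partition of $d$ equals $[2]=[2,\dots,2]$, so (2) already forces $d>2$; the Proposition treating data with a part $\neq[2,\dots,2]$ then produces an indecomposable realization. Finally, if (3) holds and some $D_i\neq[2,\dots,2]$ we are back in case (2); otherwise every $D_i=[2,\dots,2]$ and Proposition \ref{d>4t>2} applies verbatim.

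For the necessity direction I would assume that none of (1)--(3) holds and deduce that every realization of $\mathscr{D}$ is decomposable. Negating the disjunction gives: $d$ is even with $d\neq2$, hence $d\ge4$; every $D_i=[2,\dots,2]$; and $d\le4$ or $s\le2$. Using that $s>1$ always holds for even degree (as noted after equation (\ref{cE})), this leaves exactly two, possibly overlapping, configurations: $d=4$ with all parts $[2,2]$, which is covered by the Proposition treating degree $4$, and $s=2$ with both parts $[2,\dots,2]$, which is covered by Proposition \ref{d>4t=2} (itself resting on Lemma \ref{unico}). In either configuration every primitive realization is decomposable, and by the structural remark above every realization whatsoever is decomposable, so $\mathscr{D}$ admits no indecomposable realization.

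The mathematical content is entirely carried by the earlier propositions, so I do not expect a genuine analytic obstacle; the delicate point is purely the Boolean bookkeeping of the necessity direction, namely correctly negating the disjunction of (1)--(3) and verifying that the two surviving configurations are covered, without gap or omission, by precisely the two decomposability propositions. A secondary subtlety worth flagging explicitly is that condition (2) is vacuous when $d=2$, so one must be careful not to invoke the Proposition requiring $d>2$ in that regime; keeping the $d=2$ case isolated under (1) avoids this pitfall.
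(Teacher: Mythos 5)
Your proposal is correct and follows essentially the same route as the paper: Theorem \ref{d par} is stated there as a summary (with \qed and no separate argument) of the preceding results, namely the proposition realizing data with some $D_i\neq[2,\dots,2]$, Proposition \ref{d>4t>2}, Proposition \ref{d>4t=2}, and the degree-$4$ proposition, together with the fact that non-primitive coverings are always decomposable. Your explicit Boolean bookkeeping for the necessity direction (using $s>1$ to reduce $s\le 2$ to $s=2$) is exactly the implicit content of the paper's summary.
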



Departamento de Matem\'atica DM-UFSCar

Universidade Federal  de S\~ao Carlos

Rod. Washington Luis, Km. 235. C.P 676 - 13565-905 S\~ao Carlos, SP - Brasil

nbedoya@dm.ufscar.br

\vspace{0.5cm}
Departamento de Matem\'atica

Instituto de Matem\'atica e Estat\'istica

Universidade de S\~ao Paulo

Rua do Mat\~ao 1010, CEP 05508-090, S\~ao Paulo, SP, Brasil.

dlgoncal@ime.usp.br

\end{document}